\renewcommand{\cite}[1]{\citeauthor*{#1} [\citeyear{#1}]}
\definecolor{couleurCitations}{rgb}{0,0.65,0}
\definecolor{couleurRef}{rgb}{0.75,0,0}
\title{\textbf{Title}}                                   %
\renewcommand\theequation{\thesection.\arabic{equation}} %
\begin{document}%%%%%%%%%%%%%%%%%%%%%%%%%%%%%%%%%%%%%%%%%%%%%%%%%%%%%%%%%%%%%%%%%%%%%%%%%%
%%%%%%%%%%%%%%%%%%%%%%%%%%%%%%%%%%%%%%%%%%%%%%%%%%%%%%%%%%%%%%%%%%%%%%%%%%%%%%%%%%%%%%%%%%
\newcommand\nrthis{\addtocounter{equation}{1}\tag{\theequation}}
\newcommand*\tageq{\refstepcounter{equation}\tag{\theequation}}

%%%%%%%%%%%%%%%%%%%%%%%%%%%
    %
    \newcommand{\estimatorb}[1]{\hat{#1}}			% estimateur Bayésien

%%%%%%%%%%%%%%%%%%%%%%%%%%%

%%%% Definintions Theorems, propositions, tec.

\theoremstyle{plain}
\newtheorem{theorem}{Theorem}%[section]
\newtheorem{proposition}{Proposition}%[section]
\newtheorem{lemma}{Lemma}
\newtheorem{definition}{Definition}
\newtheorem*{remark}{Remark}
\newtheorem{assumptions}{Assumption}
\newtheorem{properties}{Properties}
\newtheorem{corollary}{Corollary}
\newtheorem{example}{Example}

%%%%%%%%%%%%%%%%%
% letters
\newcommand{\op}[1]{\operatorname{#1}}                     % gradi büechstabe
% abreviations
\newcommand{\1}{{\rm 1}\mskip -4,5mu{\rm l} }              % indicator function
\newcommand{\amin}{\mathop{\mathrm{arg\,min}}}             % argmin
\newcommand{\pen}{\op{pen}}                                % penalty
\newcommand{\h}[1]{\hat{#1}}                               % hat
\newcommand{\bh}{\hat{\beta}}                               % hat
\newcommand{\bt}{\widetilde{\beta}}                               % tilde
\newcommand{\bl}{\bar{\beta}}                               % tilde
\newcommand{\bc}{\stackrel{\circ}{\beta}}                               % tilde

\newcommand{\eh}{\hat{\epsilon}}                             % hat
\newcommand{\bha}{\hat{\beta}_{\alpha}}                       % hat
\newcommand{\bo}{\beta^{0}}                                    % hoch 0

\newcommand{\0}[1]{{#1}^{0}}                               % hoch null
\newcommand{\dO}{\delta_{\Omega}}                          % delta Omega
\newcommand{\GO}{\Gamma_{\Omega}}                          % Tau Omega
\newcommand{\Obs}{\Omega(\beta_{S})}    
\newcommand{\Rp}{\mathbb{R}^{p}}                           % reele zahlen hoch p
\newcommand{\R}[1]{\mathbb{R}^{#1}}                        % reele zahlen hoch p
\newcommand{\Ost}{\Omega_{*}}                              % Omega star
\newcommand{\N}{\mathcal{N}}                               % Normal

\newcommand{\dell}{\delta\ltsr{\epsilon}\left[(\lambda^{*}+\lambda^{m})\Omega(\bh_{S}-\beta)+(\lambda^{*}-\lambda^{m})\Omega^{S^{c}}(\bh_{S^{c}})\right]}                              % Omega star
% norms
\newcommand{\ltn}[1]{  \lVert  #1    \rVert  _{n}^{2}}            % l2 norm squared and divided by n
\newcommand{\lt}[1]{  \lVert  #1    \rVert_{\ell_{2}}}  
\newcommand{\ltsr}[1]{ \lVert  #1    \rVert  _{n}}        % l2 norm divided by square root of n
\newcommand{\ltsrr}[1]{\lVert  #1    \rVert  _{n}}              % l2 norm divided by n
\newcommand{\slt}[2]{  \langle #1,#2 \rangle_n}          % skalar product l2 divided by n
\newcommand{\lo}[1]{  \lVert  #1    \rVert  _{1}}            % l1 norm
\newcommand{\linf}[1]{  \lVert  #1    \rVert  _{\infty}}            % l00 norm

\newcommand{\Var}{\operatorname{Var}}
\newcommand{\E}{\operatorname{E}}
\newcommand{\Prob}{\operatorname{P}}

\newcommand{\supp}{\mathop{\mathrm{supp}}}

\newcommand{\cc}{\cellcolor[rgb]{0.9,1,0.8}}

%\maketitle               % 
%\nocite*                 %
\selectlanguage{english}  %
%%%%%%%%%%%%%%%%%%%%%%%%%%%

\title{\bf Sharp Oracle inequalities for square root regularization.}
\author{\bf \small Benjamin Stucky and Sara van de Geer}

\maketitle
\begin{center}
{\textit{Seminar für Statistik}} \\
{\textit{ETH Zürich}}\\
{\textit{Switzerland}}
\end{center}

\begin{abstract} 
 We study a set of regularization methods for high-dimensional linear regression models. These penalized estimators have the square root of the residual sum of squared errors as loss function, 
 and any weakly decomposable norm as penalty function. This fit measure is chosen because of its property that the estimator does not depend on the unknown 
 standard deviation of the noise. On the other hand, a generalized weakly decomposable norm penalty is very useful in being able to deal with different underlying sparsity structures. 
 We can choose a different sparsity inducing norm
 depending on how we want to interpret the unknown parameter vector $\beta$. Structured sparsity norms, as defined in \citet{mi2}, are special cases of weakly decomposable norms, 
 therefore we also include the square root LASSO (\citet{belloni1}), the group square root LASSO (\citet{led1}) 
 and a new method called the square root SLOPE (in a similar fashion to the SLOPE from \citet{candes1}).
 For this collection of estimators our results provide sharp oracle inequalities with the Karush-Kuhn-Tucker conditions. We discuss some examples of estimators. Based on a simulation we 
 illustrate some advantages of the square root SLOPE.
\end{abstract}

\smallskip
\noindent \textit{Square Root LASSO, Structured Sparsity, Karush-Kuhn-Tucker, Sharp Oracale Inequality, Weak Decomposability.}

%%%%%%%%%%%%%%%%%%%%%%%%%%%%%%%%%%%%%%%%%%%%%%%%%%%%%%%%%%%%%%%%%%%%%%%%%%%%%%%%%%%%%%%%%%%
\section{Introduction and Model}
The recent development of new technologies makes data gathering not a big problem any more. In some sense there is more data than we can handle, or than we need. 
The problem has shifted towards finding useful and meaningful information in the big sea of data. An example where such problems arise is the high-dimensional linear regression model
\begin{equation}Y=X\beta^{0}+\epsilon. \label{lr}\end{equation}
Here $Y$ is the $n-$dimensional response variable, $X$ is the $n\times p$ design matrix and $\epsilon$ is the identical and independent distributed noise vector. 
The noise has $\mathrm{E}(\epsilon_{i})=0, \mathrm{Var}(\epsilon_{i})=\sigma^{2},\ \ \forall i\in \{1,...,n\}$. 
Assume that $\sigma$ is \textbf{unknown}, and that $\beta^{0}$ is the "true" underlying  $p-$dimensional parameter vector of the linear regression model with active set $S_{0}:=\supp (\beta^{0})$.\\

While trying to explain $Y$ through different other variables, in the \\
high-dimensional linear regression model, we need to set less important explanatory variables to zero. Otherwise we would have 
overfitting. 
This is the process of finding a trade-off between a good fit and a sparse solution. In other words we are trying to find a solution that explains our data well, 
but at the same time only uses more important variables to do so. 

The most famous and widely used estimator for the high-dimensional regression
model is the $\ell_1-$regularized version of least squares, called LASSO (\citet{tib1})
$$\hat{\beta}_{L}(\sigma):=\amin_{\beta\in\mathbb{R}^{p}}\left\{ \ltn{Y-X\beta}+2\lambda_{1}\sigma\lo{\beta} \right\}.$$

Here $\lambda_{1}$ is a constant called the regularization level, which regulates how sparse our solution should be. Also note that the construction of the LASSO estimator depends on the unknown noise level $\sigma$.
We moreover let $\lo{a}:=\sum_{i=1}^{p}\left|a_{i}\right|$ for any $a\in{\mathbb{R}^{p}}$ denote the $\ell_1-$norm and for any $a\in\mathbb{R}^{n}$ we write
$\ltn{a}=\sum_{j =1}^{n}a_{j}^{2}/n$, the $\ell_{2}-$ norm squared and divided by $n$.
The LASSO uses the $\ell_1-$norm as a measure of sparsity. This measure as regulizer sets a number of parameters to zero.

Let us rewrite the LASSO into the following form
$$\hat{\beta}_{L}=\amin_{\beta\in\mathbb{R}^{p}}\left\{ \left(\ltsr{Y-X\beta}+\lambda^{'}(\beta)\lo{\beta}\right)\cdot \frac{2\lambda_{1}\sigma}{\lambda^{'}(\beta)} \right\},$$
where $\lambda^{'}(\beta):=\frac{2\lambda_{1}\sigma}{\ltsr{Y-X\beta}}.$
Instead of minimizing with $\lambda^{'}(\beta)$, a function of $\beta$, let us assume that we keep $\lambda^{'}( \beta)$ a fixed constant.
Then we get the Square Root LASSO method
$$\hat{\beta}_{srL}:=\amin_{\beta\in\mathbb{R}^{p}}\left\{ \ltsr{Y-X\beta}+\lambda\lo{\beta} \right\}.$$
So in some sense the $\lambda$ for the Square Root LASSO is a scaled version, scaled by an adaptive estimator of $\sigma$, of $\lambda_{1}$ from the LASSO.
By the optimality conditions it is true that
$$\bh_{L}(\ltsr{Y-X\bh_{srL}})=\bh_{srL}.$$
The Square Root LASSO was introduced by \citet{belloni1} in order to get a pivotal method. An equivalent formulation as a joint convex optimization program can be found in \citet{owen1}. 
This method has been studied under the name Scaled LASSO in \citet{sun1}.
Pivotal means that the theoretical $\lambda$ does not depend on the unknown standard deviation $\sigma$
or on any estimated version of it. The estimator does not require the estimation of the unknown $\sigma$. 
\citet{belloni2} also showed that under Gaussian noise the theoretical $\lambda$ can be chosen of order $\Phi^{-1}(1-\alpha/2p)/\sqrt{n-1}$, with $\Phi^{-1}$ denoting the inverse of the standard Gaussian 
cumulative distribution function, and $\alpha$ being some small probability. 
This is independent of $\sigma$ and achieves a near oracle inequality for the prediction norm of convergence rate $\sigma\sqrt{(|S_{0}|/n)\log{p}}$. 
In contrast to that, the theoretical penalty level of the LASSO depends on knowing $\sigma$ in order to achieve similar oracle inequalities for the prediction norm.

The idea of the square root LASSO was further developed in \citet{led1} to the group square root LASSO, in order to get a selection of groups of predictors. The group LASSO norm is another way to describe
an underlying sparsity, namely if groups of parameters should be set to zero, instead of individual parameters. Another extension for the the square root LASSO in the case
of matrix completion was given by \citet{klopp}.\\
Now in this paper we go further and generalize the idea of the square root LASSO to any sparsity inducing norm.
From now on we will look at the family of norm penalty regularization methods, which are of the following square root type
$$\hat{\beta}:=\amin_{\beta\in\mathbb{R}^{p}}\left\{ \ltsr{Y-X\beta}+\lambda\Omega(\beta) \right\},$$
where $\Omega$ is any norm on $\mathbb{R}^{p}$.
This set of regularization methods will be called square root regularization methods.
Furthermore, we introduce the following notations
\begin{align*}
 &\hat{\epsilon}:=Y-X\bh &&\text{the residuals,}&&\qquad\\
 &\Omega^{*}(x):=\max_{z,\Omega(z)\leq 1}z^{T}x, \text{ }x\in \mathbb{R}^{p}  &&\text{the dual norm of the norm } \Omega, \text{ and }&&\qquad\quad\quad \\
 &\beta_{S}=\{\beta_{j}:j\in S\} && \forall S \subset \{1,...,p\} \text{ and all vectors }\beta\in \mathbb{R}^{p}.&&\qquad\\
\end{align*}

Later we will see that describing the underlying sparsity with an appropriate sparsity norm can make a difference in how good the errors will be. Therefore in this paper we extend the idea of 
the square root LASSO with the $\ell_1-$penalty to more general weakly decomposable norm penalties. The theoretical $\lambda$ of such an estimator will not depend on $\sigma$ either. We introduce the Karush-Kuhn-Tucker conditions for these estimators and give
sharp oracle inequalities. In the last two sections we will give some examples of different norms and simulations comparing the square root LASSO with the square root SLOPE.\\

\section{Karush-Kuhn-Tucker Conditions} 
As we already have seen before, these estimators need to calculate a minimum over $\beta$. The Karush-Kuhn-Tucker conditions characterize this minimum.
In order to formulate these optimality conditions we need some concepts of convex optimization. For the reader who is not familiar with this topic, we will introduce the subdifferential, 
which generalizes the differential, and give a short overview of some properties, as can be found for example in \citet{bach2}.
For any convex function $g:\R{p}\to \R{}$ and any vector $w\in \R{p}$ we define its subdifferential as
$$\partial g (w):= \{ z \in \R{p} ; \text{ }g(w')\geq g(w)+z^{T}(w'-w) \text{  }\forall w'\in \R{p}\} .$$
The elements of $\partial g (w)$ are called the subgradients of $g$ at $w$.

Let us remark that all convex functions have non empty subdifferentials at every point. Moreover by the definition of the subdifferential
any subgradient defines a tangent space $w'\mapsto g(w)+z^{T}\cdot (w'-w)$, that goes through $g(w)$ and is at any point lower than the function $g$. 
If $g$ is differentiable at $w$, then its subdifferential at $w$ is the usual gradient.
Now the next lemma, which dates back to Pierre Fermat (see \citet{fermat}), shows how to find a global minimum for a convex function $g$.

\begin{lemma}[Fermat's Rule]\label{l0}
 For all convex functions $g:\R{p}\to \R{}$ it holds that
 $$v\in \R{p} \text{ is a global minimum of } g \text{ }\Leftrightarrow 0\in \partial g(v).$$
\end{lemma}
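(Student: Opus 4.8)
The plan is to read off both implications directly from the defining inequality of the subdifferential. The whole point is that the condition $0\in\partial g(v)$, after one substitutes the candidate subgradient $z=0$, becomes \emph{verbatim} the statement that $v$ is a global minimum, so no machinery beyond the definition is needed.

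First I would treat the direction $0\in\partial g(v)\Rightarrow v\text{ minimizes }g$. By the definition of the subdifferential, $0\in\partial g(v)$ means
$$g(w')\geq g(v)+0^{T}(w'-v)=g(v)\qquad\forall w'\in\R{p},$$
which is precisely the assertion that $g(v)\leq g(w')$ for every $w'$, i.e. that $v$ is a global minimum of $g$. Conversely, if $v$ is a global minimum then $g(w')\geq g(v)$ for all $w'\in\R{p}$; rewriting the right-hand side trivially as $g(v)+0^{T}(w'-v)$ exhibits $z=0$ as an element of $\partial g(v)$, giving $0\in\partial g(v)$. Both directions thus reduce to inserting or extracting the zero vector in the subgradient inequality.

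I expect no genuine obstacle here: the subdifferential has been defined precisely so that this equivalence is essentially tautological, which is what makes it the natural generalization of the classical first-order condition $\nabla g(v)=0$. It is worth remarking that convexity of $g$ is in fact not used anywhere in the argument, so the stated equivalence holds for an arbitrary $g:\R{p}\to\R{}$ under this definition of $\partial g$. Convexity enters only through the fact recalled in the text preceding the lemma, namely that $\partial g(w)$ is nonempty at every point; this is what turns the criterion $0\in\partial g(v)$ into a genuinely useful and checkable characterization of minimizers rather than a potentially vacuous one.
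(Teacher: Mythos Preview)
Your argument is correct and is the standard one. Note that the paper does not actually supply a proof of this lemma: it is stated as a classical fact, attributed to Fermat, and used without further justification. Your observation that convexity is not needed for the bare equivalence is also accurate; as you say, convexity matters only to guarantee that $\partial g(w)$ is nonempty, which the paper remarks separately.
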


For any norm $\Omega$ on $\mathbb{R}^{p}$ with $\omega\in\mathbb{R}^p$ it holds true that its subdifferential can be written as (see \citet{bach2} Proposition 1.2)
\begin{equation}\label{e1}
   \partial \Omega (\omega )=
   \begin{cases}
     \{z \in \mathbb{R}^{p}; \Omega^{*}(z)\leq 1\} & \text{if } \omega=0 \\
     \{z \in \mathbb{R}^{p}; \Omega^{*}(z)= 1 \bigwedge z^{T}w = \Omega(\omega) \} & \text{if } \omega \neq 0.
   \end{cases}
\end{equation}
We are able to apply these properties to our estimator $\bh$. Lemma \ref{l0} implies that 
$$ \bh \text{ is optimal }\Leftrightarrow -\frac{1}{\lambda}\nabla\ltsr{Y-X\bh}\in \partial\Omega(\bh).$$

This means that, in the case $\ltsr{\eh}>0$, for the square root regularization estimator $\bh$ it holds true that
\begin{equation}\label{e2}
   \bh \text{ is optimal } \Leftrightarrow \frac{X^T(Y-X\bh)}{n\lambda \ltsr{Y-X\bh}}\in \partial \Omega (\bh ).
\end{equation}
By combining equation \eqref{e1} with \eqref{e2} we can write the KKT conditions as

\begin{equation}\label{e3}
   \bh \text{ is optimal } \Leftrightarrow 
   \begin{cases}
     \Omega^{*}\left(\frac{\hat{\epsilon}^{T}X}{n\ltsr{\hat{\epsilon}}}\right)\leq \lambda                                    & \text{if } \bh=0 \\
     \Omega^{*}\left(\frac{\hat{\epsilon}^{T}X}{n\ltsr{\hat{\epsilon}}}\right)=    \lambda                                    & \text{if } \bh \neq 0.\\
     \bigwedge \frac{\hat{\epsilon}^{T}X\bh}{n\ltsr{\hat{\epsilon}}} = \lambda \Omega(\bh)                                                                   
   \end{cases}
\end{equation}
What we might first remark about equation \eqref{e3} is that in the case of $\bh \neq 0$ the second part can be written as
$$\hat{\epsilon}^{T}X\bh/n = \Omega(\bh)\cdot \Omega^{*}\left(\frac{\hat{\epsilon}^{T}X}{n}\right).$$
This means that we in fact have equality in the generalized Cauchy-Schwartz Inequality for these two $p-$dimensional vectors.
Furthermore let us remark that the equality
$$\hat{\epsilon}^{T}X\bh/n = \Omega(\bh)\lambda \ltsr{\hat{\epsilon}}$$ 
trivially holds true for the case where $\bh=0$.
It is important to remark here that, in contrast to the KKT conditions for the LASSO, we have an additional $\ltsr{\eh}$ term in the 
expression $ \Omega^{*}\left(\frac{\hat{\epsilon}^{T}X}{n\ltsr{\hat{\epsilon}}}\right)$. This nice scaling leads to the property that the theoretical $\lambda$ is independent of $\sigma$.\\
With the KKT conditions we are able to formulate a generalized type of KKT conditions. This next lemma is needed for the proofs in the next chapter.
\begin{lemma}\label{l00}
 For the square root type estimator $\bh$ we have for any $\beta\in\mathbb{R}^{p}$ and when $\ltsr{\eh}\neq 0$
 $$\frac{1}{\ltsr{\hat{\epsilon}}}\hat{\epsilon}^{T}X(\beta-\bh)/n+\lambda\Omega(\bh)\leq \lambda\Omega(\beta).$$
\end{lemma}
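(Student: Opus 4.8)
The plan is to read the inequality off directly from the Karush--Kuhn--Tucker conditions \eqref{e3}. Write $z:=X^{T}\hat{\epsilon}/(n\ltsr{\hat{\epsilon}})\in\R{p}$ for the (rescaled) correlation vector appearing throughout \eqref{e3}, so that $\hat{\epsilon}^{T}X(\beta-\bh)/(n\ltsr{\hat{\epsilon}})=z^{T}(\beta-\bh)$. The first fact I would extract is that $\Omega^{*}(z)\leq\lambda$ holds in every case: this is the first line of \eqref{e3} when $\bh=0$, and the equality in the second line when $\bh\neq 0$. The second fact is the equality $z^{T}\bh=\lambda\Omega(\bh)$, which is the third line of \eqref{e3} for $\bh\neq 0$ and holds trivially for $\bh=0$ (both sides vanish), exactly as already noted after \eqref{e3}.

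With these two facts the argument is a one-line rearrangement. I would split
$$z^{T}(\beta-\bh)+\lambda\Omega(\bh)=z^{T}\beta-z^{T}\bh+\lambda\Omega(\bh)=z^{T}\beta,$$
where the final equality uses $z^{T}\bh=\lambda\Omega(\bh)$. It then remains only to bound $z^{T}\beta$, and for this I would apply the generalized Cauchy--Schwarz (dual norm) inequality $z^{T}\beta\leq\Omega^{*}(z)\,\Omega(\beta)$, which is immediate from the definition $\Omega^{*}(x)=\max_{\Omega(w)\leq 1}w^{T}x$. Combining with $\Omega^{*}(z)\leq\lambda$ yields $z^{T}\beta\leq\lambda\Omega(\beta)$, which is exactly the asserted bound.

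There is no genuine obstacle here; the lemma is an algebraic reformulation of optimality. The only point deserving care is the unification of the two branches of \eqref{e3}, namely checking that both $\Omega^{*}(z)\leq\lambda$ and $z^{T}\bh=\lambda\Omega(\bh)$ remain valid when $\bh=0$, so that the single chain of inequalities above covers both cases without branching. Both do hold, and it is precisely this that makes the generalized KKT inequality valid for an arbitrary $\beta$ and uniformly in whether or not the estimator vanishes.
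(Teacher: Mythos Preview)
Your proof is correct and follows essentially the same approach as the paper: both extract from the KKT conditions \eqref{e3} that $\Omega^{*}(z)\leq\lambda$ and $z^{T}\bh=\lambda\Omega(\bh)$ hold uniformly in whether $\bh$ vanishes, then combine these with the dual-norm inequality $z^{T}\beta\leq\Omega^{*}(z)\Omega(\beta)$. Your presentation is slightly tidier in that you introduce the shorthand $z$ and collapse the argument into a single chain, but the mathematical content is identical.
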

\begin{proof}
 First we need to look at the inequality from the KKT's, which holds in any case
 \begin{equation}\label{ee00}
  \Omega^{*}\left(\frac{\hat{\epsilon}^{T}X}{n\ltsr{\hat{\epsilon}}}\right)\leq \lambda .
 \end{equation}
 And by the definition of the dual norm and the maximum, we have with \eqref{ee00}
 \begin{align*}
  \frac{1}{\ltsr{\hat{\epsilon}}}\hat{\epsilon}^{T}X\beta/n &\leq \Omega(\beta)\cdot\max_{\beta\in\mathbb{R}^{p},\Omega(\beta)\leq 1}\frac{\hat{\epsilon}^{T}}{\ltsr{\hat{\epsilon}}}X\beta/n\\
                                                       &= \Omega(\beta)\cdot\Omega^{*}\left(\frac{\hat{\epsilon}^{T}X}{n\ltsr{\hat{\epsilon}}}\right)\\
                                                       &\leq \Omega(\beta)\lambda.\nrthis \label{ee1}
 \end{align*}

 The second equation from the KKT's, which again holds in any case, is
 \begin{equation}
  \frac{1}{\ltsr{\hat{\epsilon}}}\hat{\epsilon}^{T}X\bh/n = \lambda\Omega(\bh).\label{ee2}
 \end{equation}
 Now putting \eqref{ee1} and \eqref{ee2} together we get the result.

\end{proof}

%Lemma \ref{l1} generalizes the KKT-conditions. We could think of it as replacing the gradient of the penalty at $\bl$ in the KKT conditions of the estimator $\bl$ to $(\pen(\bl)- \pen(\beta))\cdot(\bl-\beta)^{-1}$
%We can also generalize Lemma \ref{l00} to more general convex penalties in the following way
%\begin{lemma}\label{l1}
% Let $\mathcal{B}$ be a convex subset of $\Rp$, and $\pen :\mathcal{B}\to \mathbb{R}$ any convex penalty.
% Define the estimator $\bl :=\amin_{\beta \in \mathcal{B}}\left( \ltsr{Y-X\beta }+\pen (\beta) \right) $.\\
% Then
% $$\forall \beta \in \mathcal{B} : \frac{1}{\ltsr{Y-X \bl }}(Y-X \bl )^{T}X(\beta-\bl )/n + \pen (\bl)\leq \pen (\beta).$$
%\end{lemma}%
%
%\begin{proof}
% Define $\bha:= (1-\alpha)\bl+\alpha \beta$.\\
 %By the definition of $\bl$ and the convexity of the penalty we get
 %\begin{align}\ltsr{Y-X\bl }+\pen (\bl)    &\leq \ltsr{Y-X\bha }+\pen (\bha)  \notag \\
 %                                                       &\leq \ltsr{Y-X\bha } + (1-\alpha )\pen(\bl)+\alpha\pen(\beta). \label{eq:12}\end{align}
 %Now by taking the limit $\alpha\to 0$ we get 
 %$$\lim_{\alpha \downarrow 0}\frac{\ltsr{Y-X\bl }-\ltsr{Y-X\bha}}{\alpha}= - \ltsr{Y-X\bl}^{-1}(Y-X \bl )^{T}X(\beta-\bl )/n $$
 %If we divide equation \eqref{eq:12} over $\alpha$ and take the limit $\alpha\to 0$ we get the result
 %$$\ltsr{Y-X\bl}^{-1}(Y-X \bl )^{T}X(\beta-\bl )/n+\pen (\bha)\leq \pen(\beta).$$
%\end{proof}

\section{Sharp Oracle Inequalities for the square root regularization estimators}
We provide sharp oracle inequalities for the estimator $\bh$ with a norm $\Omega$ that satisfies a so called weak decomposability condition.
An oracle inequality is a bound on the estimation and prediction errors. This shows how good these estimators are in estimating the parameter vector $\beta^{0}$.
This is an extension of the sharp oracle results given in \citet{sara1} for LASSO type of estimators, which in turn was an generalization of the sharp oracle inequalities for the LASSO and 
nuclear norm penalization in \citet{sharpe1} and \citet{sharpe2}.
Let us first introduce all the necessary definitions and concepts.
Some normed versions of values need to be introduced:
\begin{align*}
 f&=\frac{\lambda\Omega(\bo)}{\ltsr{\epsilon}}\\
 \lambda^{S^c}&=\frac{\Omega^{S^{c}*}((\epsilon^{T}X)_{S^{c}})}{n\ltsr{\epsilon}}\\
 \lambda^{S}&=\frac{\Omega^{*}((\epsilon^{T}X)_{S})}{n\ltsr{\epsilon}}\\
 \lambda^{m}&=\max(\lambda^{S},\lambda^{S^{c}})\\
 \lambda^{0}&=\frac{\Omega^{*}(\epsilon^{T}X)}{n\ltsr{\epsilon}}.
\end{align*}
For example the quantity $f$ gives the measure of the true underlying normalized sparsity. $\Omega^{S^{c}}$ denotes a norm on $\mathbb{R}^{p-|S|}$ which will shortly 
be defined in Assumption II. Furthermore $\lambda^{m}$ will take the role of the theoretical (unknown) $\lambda$. 
If we compare this to the case of the LASSO we see that instead of the $\ell_{\infty}-$norm we generalized it to the dual norm of $\Omega$. 
Also remark that in $\lambda^{m}$ a term $\frac{1}{\ltsr{\epsilon}}$ appears.
This scaling is due to the square root regularization, which will be the reason that $\lambda$ can be chosen independently of the unknown standard deviation $\sigma$.
%The maximum in $\lambda^{m}$ somewhat naturally occurs due to the dual norm of $\Omega(\beta_{S})+\Omega^{S^c}}(\beta_{S^{c}})$, see Assumption II for details.
Now we will give the two main assumptions that need to hold in order to prove the oracle inequalities. Assumption I deals with avoiding overfitting, and the main concern of Assumption II is
that the norm has the desired property of promoting a structured sparse solution $\bh$. We will later see, that the structured sparsity norms in \citet{mi2} and \citet{mi1} are all of this form. 
Thus, Assumption II is quite general.

% \textbf{Assumption I (overfitting):}\\
%  In order to avoid overfitting, we need to choose $\ltsr{\eh}>0$. Because if $\ltsr{\eh}=0$ then $\bh$ does the same thing as the Ordinary Least Squares (OLS) estimator $\beta_{OLS}$. 
%  And this leads to overfitting. That is why we need
%  a lower bound on $\ltsr{\eh}$. In order to achieve this lower bound we need the following two assumptions:
% \begin{itemize}
%  \item $\frac{\lambda^{S^c}}{\lambda}(1+2f)< 1/2$. The first term on the left hand side makes sure that lambda is big enough, and the second term that there is enough sparsity.
%  \item $\lambda^{S}\leq \lambda^{S^{c}}$
% \end{itemize}
% As already indicated, the Assumption $\frac{\lambda^{S^c}}{\lambda}(1+2f)< \frac{1}{2},$  gives us two inequalities.
% First $2\lambda^{S^{c}}<\lambda$, this means that lambda has to be choosen big enough, roughly speaking bigger than $\lambda^{S^{c}}$. Then $2\lambda^{S^{c}}<\frac{1}{2f}\lambda$, 
% which means that $f$ needs to be small enough. Because if $f$ is too big, the first part $2\lambda^{S^{c}}<\lambda$ might restict it.
\textbf{Assumption I (overfitting):}\\
 If $\ltsr{\eh}=0$, then $\bh$ does the same thing as the Ordinary Least Squares (OLS) estimator $\beta_{OLS}$, namely it overfits. That is why we need
 a lower bound on $\ltsr{\eh}$. In order to achieve this lower bound we make the following assumptions:
 $$P\left(Y\in \{\widetilde{Y}: \min_{\beta, \text{s.t.} X\beta=\widetilde{Y}}\Omega(\beta)\leq \ltsr{\epsilon}\}\right)=0.$$
 $$\frac{\lambda^{0}}{\lambda}\big(1+2f\big)< 1.$$
 %Here the $S$ in the definition of $\lambda^{m}$ is any allowed set.
 The $\frac{\lambda^{0}}{\lambda}$ term makes sure that we introduce enough sparsity (no overfitting).

% As already indicated, the Assumption $\frac{\lambda^{S^c}}{\lambda}(1+2f)< 1,$  gives us two inequalities.
% First $2\lambda^{S^{c}}<\lambda$, this means that $\lambda$ has to be choosen big enough. Then $\lambda^{S^{c}}<\frac{1}{2f}\lambda$, 
% which means that $f$ needs to be small enough. Because if $f$ is too large, the first part $\lambda^{S^{c}}<\lambda$ might restict it.

\textbf{Assumption II (weak decomposability):}\\
 Assumption II is fulfilled for a set $S\subset \{1,...,p\}$ and a norm $\Omega$ on $\mathbb{R}^{p}$ 
 if this norm is weakly decomposable, and $S$ is an allowed set for this norm.
 This was used by \citet{sara1} and goes back to \citet{bach2}. It is an assumption on the structure of the sparsity inducing norm.
 By the triangle inequality we have:
 $$\Omega(\beta_{S^{c}})\geq \Omega(\beta)-\Omega(\beta_{S}).$$
 But we will also need to lower bound this by another norm evaluated at $\beta_{S^{c}}$. 
 This is motivated by relaxing the following decomposability property of the $\ell_1$-norm:
 $$\lo{\beta}=\lo{\beta_{S}}+\lo{\beta_{S^{c}}}, \forall \text{ sets } S\subset{1,...,p} \text{ and all }\beta\in \mathbb{R}^{p}.$$
 This decomposability property is used to get oracle inequalities for the LASSO. But we can relax this property, and introduce weakly decomposable norms.
\begin{definition}[Weak decomposability]
 A norm $\Omega$ in $\mathbb{R}^{p}$ is called weakly decomposable for an index set $S\subset\{1,...,p\}$, if
 there exists a norm $\Omega^{S^{c}}$ on $\mathbb{R}^{|S^{c}|}$ such that
 $$\forall \beta\in\mathbb{R}^{p} \ \ \Omega(\beta)\geq \Omega(\beta_{S})+\Omega^{S^{c}}(\beta_{S^{c}}).$$
\end{definition}
 Furthermore we call a set $S$ allowed if $\Omega$ is a weakly decomposable norm for this set.
 
\begin{remark}
 In order to get a good oracle bound, we will choose the norm $\Omega^{S^{c}}$ as large as possible. We will also choose the allowed sets $S$ in such a way to reflect the active set $S_{0}$. Otherwise
 we would of course be able to choose as a trivial example the empty set $S=\varnothing$.
\end{remark}

Now that we have introduced the two main assumptions, we can introduce other definitions and concepts also used in \citet{sara1}.
\begin{definition}
 For $S$ an allowed set of a weakly decomposable norm $\Omega$, and $L>0$ a constant, the $\Omega-$eigenvalue is defined as
 $$\delta_{\Omega}(L,S):=\min\left\{ \|X\beta_{S}-X\beta_{S^{c}}\|_{n}: \Omega(\beta_{S})=1, \Omega^{S^{c}}(\beta_{S^{c}})\leq L\right\}.$$
 Then the $\Omega-$effective sparsity is defined as 
 $$\Gamma_{\Omega}^{2}(L,S):= \frac{1}{\delta_{\Omega}^{2}(L,S)}.$$
\end{definition}
The $\Omega-$eigenvalue is the distance between the two sets (\citet{sara2})
 $\{X\beta_{S}: \Omega(\beta_{S})=1\}$ and $\{X\beta_{S^{c}}: \Omega^{S^{c}}(\beta_{S^{c}})\leq L\}$, see Figure \ref{eff}.
The additional discussion about these definitions will follow after the main theorem. The $\Omega-$eigenvalue generalizes the compatibility constant (\citet{sara4}).

For the proof of the main theorem we need some small lemmas. For any vector $\beta\in \mathbb{R}^{p}$ the $(L,S)-$cone condition for a 
norm $\Omega$ is satisfied if $\Omega^{S^{c}}(\beta_{S^{c}})\leq L\Omega(\beta_{S})$,
with $L>0$ a constant and $S$ an allowed set.

 The proof of Lemma \ref{l2} can be found in \citet{sara1}.
 It shows the connection between the $(L,S)-$cone condition and the $\Omega-$eigenvalue. We bound $\Omega(\beta_{S})$ by a multiple of $\lVert X\beta \rVert_{n}$.
 \begin{lemma}\label{l2}
  Let $S$ be an allowed set of a weakly decomposable norm $\Omega$ and $L>0$ a constant. Then we have that the $\Omega-$eigenvalue is of the following form:
  $$\dO (L,S)=\min \left\lbrace \frac{\lVert X\beta \rVert_{n}}{\Obs} , \beta \text{ satisfies the cone condition and } \beta_{S}\neq 0 \right\rbrace.$$
  We have $\Obs \leq \GO (L,S) \lVert X\beta \rVert_{n}.$
  \end{lemma}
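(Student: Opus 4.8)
The plan is to reduce the two–set distance defining $\dO(L,S)$ to a single minimization over full vectors, and then to pass between the normalization $\Omega(\beta_S)=1$ and the cone condition by homogeneity of the norms. First I would observe that the $S^c$–constraint set $\{\beta_{S^c}: \Omega^{S^c}(\beta_{S^c})\leq L\}$ is symmetric under $\beta_{S^c}\mapsto -\beta_{S^c}$, so reflecting this piece leaves both the feasible set and the value of the minimum unchanged. Consequently the minus sign in $\ltsr{X\beta_S-X\beta_{S^c}}$ may be turned into a plus, giving
$$\dO(L,S)=\min\left\{\ltsr{X\beta}: \Omega(\beta_S)=1,\ \Omega^{S^c}(\beta_{S^c})\leq L\right\},$$
where now $\beta=\beta_S+\beta_{S^c}$ is a single vector in $\R{p}$ with $X\beta=X\beta_S+X\beta_{S^c}$; here I use that $S$ and $S^c$ partition the indices, so ranging over feasible pairs $(\beta_S,\beta_{S^c})$ is the same as ranging over feasible full vectors $\beta$.

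Next I would establish equality with the right–hand minimum by two inclusions. For one direction, any $\beta$ feasible in the display has $\Omega(\beta_S)=1$, hence satisfies the cone condition $\Omega^{S^c}(\beta_{S^c})\leq L=L\,\Obs$ with $\beta_S\neq 0$, and its ratio $\ltsr{X\beta}/\Obs=\ltsr{X\beta}$ equals its value above; thus the cone minimum is at most $\dO(L,S)$. For the reverse direction, given any $\beta$ satisfying the cone condition with $\beta_S\neq 0$, I would rescale by $\tilde\beta:=\beta/\Obs$. By $1$-homogeneity of $\Omega$ and $\Omega^{S^c}$ one has $\Omega(\tilde\beta_S)=1$ and $\Omega^{S^c}(\tilde\beta_{S^c})=\Omega^{S^c}(\beta_{S^c})/\Obs\leq L$, so $\tilde\beta$ is feasible for the display, and its value $\ltsr{X\tilde\beta}=\ltsr{X\beta}/\Obs$ matches the ratio attached to $\beta$. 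Hence $\dO(L,S)$ is at most the cone minimum, and the two coincide.

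Finally, the stated bound follows directly from this characterization: for every $\beta$ obeying the cone condition with $\beta_S\neq 0$ we have $\ltsr{X\beta}/\Obs\geq \dO(L,S)$, and rearranging together with $\GO(L,S)=1/\dO(L,S)$ yields $\Obs\leq \GO(L,S)\ltsr{X\beta}$; the case $\beta_S=0$ is trivial since then $\Obs=0$. This rearrangement presumes the non-degenerate regime $\dO(L,S)>0$, i.e. $\GO(L,S)<\infty$, which is the only regime in which the inequality carries information.

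I expect the only step requiring genuine care to be the sign in the definition of $\dO$: one must recognize that the distance between $\{X\beta_S:\Omega(\beta_S)=1\}$ and the origin–symmetric set $\{X\beta_{S^c}:\Omega^{S^c}(\beta_{S^c})\leq L\}$ is unaffected by reflecting the latter, which is precisely what converts the difference $X\beta_S-X\beta_{S^c}$ into the sum $X\beta$. The remaining steps are routine applications of the positive homogeneity of norms, and compactness of the feasible sphere $\{\Omega(\beta_S)=1\}$ and ball $\{\Omega^{S^c}(\beta_{S^c})\leq L\}$ guarantees that all minima in question are attained.
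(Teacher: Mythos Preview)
Your argument is correct. The paper does not actually prove this lemma in place; it simply cites \citet{sara1} for the proof. Your self-contained argument---using the symmetry of the norm ball $\{\beta_{S^c}:\Omega^{S^c}(\beta_{S^c})\le L\}$ to turn the difference $X\beta_S-X\beta_{S^c}$ into $X\beta$, then passing between the normalized form and the ratio form by positive homogeneity---is the standard route and is exactly what one finds in the cited reference. The only cosmetic remark is that your closing compactness comment, while true, is slightly orthogonal to the main claim: the equivalence of the two formulations holds regardless of whether the minima are attained, and the paper writes ``$\min$'' throughout as a convention rather than as a separate assertion to be verified.
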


We will also need a lower and an upper bound for $\ltsr{\eh}$, as already mentioned in Assumption I. The next Lemma \ref{l3} gives such bounds.

\begin{lemma}\label{l3}
Suppose that Assumption I holds true.
Then
$$1+f\geq \frac{\ltsr{\eh}}{\ltsr{\epsilon}}\geq \frac{1-\frac{\lambda^{0}}{\lambda}(1+2f)}{f+2}>0.$$

\end{lemma}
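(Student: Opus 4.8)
The plan is to prove the two bounds separately, using only the defining optimality of $\bh$ together with a generalized Cauchy--Schwarz inequality for the dual norm; no eigenvalue/compatibility constant is needed for this lemma. First I would record the upper bound. Since $\bh$ minimizes $\ltsr{Y-X\beta}+\lambda\Omega(\beta)$, comparing the objective at $\bh$ with its value at $\bo$ and using $Y-X\bo=\epsilon$ and $\lambda\Omega(\bo)=f\ltsr{\epsilon}$ gives
$$\ltsr{\eh}+\lambda\Omega(\bh)\ \le\ \ltsr{\epsilon}+\lambda\Omega(\bo)\ =\ (1+f)\ltsr{\epsilon}.$$
Discarding the nonnegative term $\lambda\Omega(\bh)$ yields the upper bound $\ltsr{\eh}\le(1+f)\ltsr{\epsilon}$; discarding instead $\ltsr{\eh}\ge 0$ yields the a priori control $\Omega(\bh)\le(1+f)\ltsr{\epsilon}/\lambda$, which I would keep for the lower bound.

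For the lower bound I would start from the identity $\epsilon=\eh+X(\bh-\bo)$, immediate from $\eh=Y-X\bh$ and $\epsilon=Y-X\bo$, and expand
$$\ltn{\epsilon}=\slt{\epsilon}{\eh}+\frac{1}{n}\epsilon^{T}X(\bh-\bo).$$
The first term is bounded by $\slt{\epsilon}{\eh}\le\ltsr{\epsilon}\ltsr{\eh}$ (Cauchy--Schwarz). For the second, the definition of the dual norm gives $\frac{1}{n}\epsilon^{T}X(\bh-\bo)\le\Omega^{*}\big(\epsilon^{T}X/n\big)\,\Omega(\bh-\bo)=\lambda^{0}\ltsr{\epsilon}\,\Omega(\bh-\bo)$, since $\Omega^{*}(\epsilon^{T}X/n)=\lambda^{0}\ltsr{\epsilon}$ by the definition of $\lambda^{0}$. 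The triangle inequality, the a priori bound on $\Omega(\bh)$, and $\Omega(\bo)=f\ltsr{\epsilon}/\lambda$ then give $\Omega(\bh-\bo)\le\Omega(\bh)+\Omega(\bo)\le(1+2f)\ltsr{\epsilon}/\lambda$. Substituting,
$$\ltn{\epsilon}\ \le\ \ltsr{\epsilon}\ltsr{\eh}+\frac{\lambda^{0}}{\lambda}(1+2f)\ltn{\epsilon}.$$

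Dividing by $\ltn{\epsilon}>0$ gives $\ltsr{\eh}/\ltsr{\epsilon}\ge 1-\frac{\lambda^{0}}{\lambda}(1+2f)$, and by the strict inequality $\frac{\lambda^{0}}{\lambda}(1+2f)<1$ in Assumption I the right-hand side is strictly positive, so in particular $\ltsr{\eh}>0$. Since $f\ge 0$ we have $f+2\ge 1$, whence $1-\frac{\lambda^{0}}{\lambda}(1+2f)\ge\big(1-\frac{\lambda^{0}}{\lambda}(1+2f)\big)/(f+2)$, which is exactly the claimed lower bound; the upper bound was already established, completing the chain.

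The delicate point is the lower bound and, with it, the strict positivity $\ltsr{\eh}>0$. Proceeding through Lemma \ref{l00} one would have to divide by $\ltsr{\eh}$ and hence assume positivity in advance, which is precisely what fails in the overfitting case where $\bh$ interpolates $Y$ (flagged by the first, probability-zero, part of Assumption I). The route above avoids this circularity: it uses only the plain optimality inequality and the dual-norm Cauchy--Schwarz, never a division by $\ltsr{\eh}$, so positivity drops out of the quantitative bound as a byproduct rather than being imposed as a hypothesis. I expect the only genuine bookkeeping to be keeping the a priori bound on $\Omega(\bh)$ consistent so that the coefficient $(1+2f)$ comes out correctly; the factor $f+2$ in the denominator of the statement is slack that this argument does not actually need.
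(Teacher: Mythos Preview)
Your argument is correct, and for the lower bound it is genuinely different from---and in fact cleaner than---the paper's. The paper proceeds via the triangle inequality $\ltsr{\eh}\ge\ltsr{\epsilon}-\ltsr{X(\bh-\bo)}$, then invokes the KKT-based Lemma~\ref{l00} to control $\ltn{X(\bh-\bo)}$, obtaining an inequality of the form $\ltsr{\eh}/\ltsr{\epsilon}\ge 1-\sqrt{\frac{\lambda^{0}}{\lambda}(1+2f)+f\,\ltsr{\eh}/\ltsr{\epsilon}}$, which it then rearranges (case-splitting on whether $\ltsr{\eh}/\ltsr{\epsilon}$ exceeds $1$) to extract the stated bound; this is where the denominator $f+2$ appears. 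Your route---expanding $\ltn{\epsilon}=\slt{\epsilon}{\eh}+\epsilon^{T}X(\bh-\bo)/n$ and bounding the two pieces by Cauchy--Schwarz and the dual-norm inequality---gives directly $\ltsr{\eh}/\ltsr{\epsilon}\ge 1-\frac{\lambda^{0}}{\lambda}(1+2f)$, which is strictly sharper (the $f+2$ is, as you note, slack). It also sidesteps the circularity you flag: Lemma~\ref{l00} requires $\ltsr{\eh}>0$, which is exactly the conclusion one wants, whereas your argument uses only the raw minimizer comparison and never divides by $\ltsr{\eh}$. The upper-bound halves of the two proofs are identical.
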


\begin{proof}
 The upper bound is obtained by the definition of the estimator
 $$\ltsr{Y-X\bh}+\lambda\Omega(\bh)\leq\ltsr{Y-X\bo}+\lambda\Omega(\bo).$$
 Therefore we get
 $$\ltsr{\eh}\leq \ltsr{\epsilon}+\lambda\Omega(\bo).$$
 Dividing by $\ltsr{\epsilon}$ and by the definition of $f$ we get the desired upper bound.
 The main idea for the lower bound is to use the triangle inequality 
 $$\ltsr{\eh}=\ltsr{\epsilon-X(\bh-\bo)}\geq \ltsr{\epsilon}-\ltsr{X(\bh-\bo)},$$
 and then upper bound $\ltsr{X(\bh-\bo)}$.
 With Lemma \ref{l00} we get an upper bound for $\ltsr{X(\bh-\bo)}$,
 \begin{align*}\ltsr{X(\bh-\bo)}^{2}&\leq \epsilon^{T}X(\bh-\bo)/n+\lambda\ltsr{\eh}(\Omega(\bo)-\Omega(\bh))\\
                                   &\leq \lambda^{0}\ltsr{\epsilon}\Omega(\bh-\bo)+\lambda\ltsr{\eh}(\Omega(\bo)-\Omega(\bh))\\
                                   &\leq \lambda^{0}\ltsr{\epsilon}(\Omega(\bh)+\Omega(\bo))+\lambda\ltsr{\eh}(\Omega(\bo)-\Omega(\bh))\\
                                   &\leq \lambda^{0}\ltsr{\epsilon}\Omega(\bh)+\Omega(\bo)(\lambda^{0}\ltsr{\epsilon}+\lambda\ltsr{\eh}).
                                   %&\leq \max(\lambda^{S},\lambda^{S^{c}})\ltsr{\epsilon}(\Omega(\bh_{S}-\bo)+\Omega^{S^{c}}(\bh_{S^{c}}))\\
                                   %&\quad +\lambda\ltsr{\eh}(\Omega(\bo)-\Omega(\bh))\\
                                   %&\leq \lambda^{m}\ltsr{\epsilon}\Omega(\bh-\bo)+\lambda\ltsr{\eh}(\Omega(\bo)-\Omega(\bh))\\
                                   %&\leq \lambda^{m}\ltsr{\epsilon}(\Omega(\bh)+\Omega(\bo))+\lambda\ltsr{\eh}(\Omega(\bo)-\Omega(\bh))\\
                                   %&=    \Omega(\bh)(\lambda^{m}\ltsr{\epsilon}-\lambda\ltsr{\eh})+\Omega(\bo)(\lambda^{m}\ltsr{\epsilon}+\lambda\ltsr{\eh}).    
 \end{align*}
 In the second line we used the definition of the dual norm, and the Cauchy-Schwartz inequality.
 %Now, if $\lambda^{m}\ltsr{\epsilon}\leq\lambda\ltsr{\eh}$ then we already get a lower bound
 %$$\ltsr{\eh}\geq \frac{\lambda^{m}}{\lambda}\ltsr{\epsilon}.$$
 %On the other hand, if $\lambda^{m}\ltsr{\epsilon}>\lambda\ltsr{\eh}$,
 %then we need an upper bound for $\Omega(\bh)$. 
 Again by the definition of the estimator we have
 \begin{align*}
  \Omega(\bh)\leq\frac{\ltsr{\epsilon} }{\lambda}+\Omega(\bo).
 \end{align*}
 And we are left with
 $$\ltsr{X(\bh-\bo)}    \leq  \ltsr{\epsilon}\sqrt{\frac{\lambda^{0}}{\lambda}\left(1
    +2\frac{\lambda\Omega(\beta^{0})}{\ltsr{\epsilon}}+\frac{\lambda}{\lambda^{0}}\cdot\frac{\ltsr{\eh}}{\ltsr{\epsilon}}\cdot\frac{\lambda\Omega(\beta^{0})}{\ltsr{\epsilon}}\right)}.$$
 By the definition of $f$ we get
 \begin{align*}\ltsr{X(\bh-\bo)}    &\leq  \ltsr{\epsilon}\sqrt{\frac{\lambda^{0}}{\lambda}\left(1
    +2f+\frac{\lambda}{\lambda^{0}}\frac{\ltsr{\eh}}{\ltsr{\epsilon}}f\right)}. \\
                       &=     \ltsr{\epsilon}\sqrt{\frac{\lambda^{0}}{\lambda}+2\frac{\lambda^{0}}{\lambda}f+\frac{\ltsr{\eh}}{\ltsr{\epsilon}}f}.  
 \end{align*}
 
 Now we get
 \begin{align}
  \ltsr{\eh}&\geq \ltsr{\epsilon}-\ltsr{X(\bh-\bo)} \nonumber\\
            &\geq \ltsr{\epsilon}-\ltsr{\epsilon}\sqrt{\frac{\lambda^{0}}{\lambda}+2\frac{\lambda^{0}}{\lambda}f+\frac{\ltsr{\eh}}{\ltsr{\epsilon}}f}\label{eq-lp}
 \end{align}
 Let us rearrange equation \eqref{eq-lp} further in the case $\frac{\ltsr{\eh}}{\ltsr{\epsilon}}<1$
  $$\frac{\lambda^{0}}{\lambda}+2\frac{\lambda^{0}}{\lambda}f+\frac{\ltsr{\eh}}{\ltsr{\epsilon}}f\geq \left(1-\frac{\ltsr{\eh}}{\ltsr{\epsilon}}\right)^{2}$$
  $$\frac{\ltsr{\eh}}{\ltsr{\epsilon}}f\geq 1-2\frac{\ltsr{\eh}}{\ltsr{\epsilon}}+\frac{\ltsr{\eh}^{2}}{\ltsr{\epsilon}^{2}}-\frac{\lambda^{0}}{\lambda}(1+2f)$$
  $$\frac{\ltsr{\eh}}{\ltsr{\epsilon}}f+2\frac{\ltsr{\eh}}{\ltsr{\epsilon}}\geq 1-\frac{\lambda^{0}}{\lambda}(1+2f)$$
  $$\frac{\ltsr{\eh}}{\ltsr{\epsilon}}\geq \frac{1-\frac{\lambda^{0}}{\lambda}(1+2f)}{f+2}\overset{\text{Assumption I}}{>}0.$$
 On the other hand if $\frac{\ltsr{\eh}}{\ltsr{\epsilon}}>1$, we already get a lower bound which is bigger than $\frac{1-\frac{\lambda^{0}}{\lambda}(1+2f)}{f+2}$.

\end{proof}

Finally we are able to present the main theorem. This theorem gives sharp oracle inequalities on the prediction error expressed in the $\ell_2$-norm, 
and the estimation error expressed in the $\Omega$ and $\Omega^{S^{c}}$ norms.
\begin{remark}\label{rem:rem9}Let us first briefly remark that in the Theorem \ref{th2} we need to assure that $\lambda^{*}-\lambda^{m}>0$. The assumption 
$\frac{\lambda^{m}}{\lambda}< 1/a$, with $a$ chosen as in Theorem \ref{th2}, together with the fact that $\lambda^{0}\leq \lambda^{m}$ leads to the desired inequality
$$\frac{\lambda^{*}}{\lambda}=\frac{1-\frac{\lambda^{0}}{\lambda}(1+2f)}{f+2}\geq \frac{1-\frac{\lambda^{m}}{\lambda}(1+2f)}{f+2}>\frac{\lambda^{m}}{\lambda}.$$
\end{remark}

\begin{theorem}\label{th2}
 Assume that  $0\leq \delta< 1$, and also that $a\lambda^{m}< \lambda$, with the constant 
 %$a=\sqrt{4f^{2}+4f+2}-(2f+1)$.
 %$a=\frac{3}{2}+f-\sqrt{f^{2}+3f+5/4}$.
 %$a=\left(f+\frac{3}{2}\right)-\sqrt{2\left(f+1\right)^{2}-\frac{3}{4}}.$
 $a=3(1+f).$
 We invoke also Assumption I (overfitting) and Assumption II (weak decomposability) for $S$ and $\Omega$. Here the allowed set $S$ is chosen
 such that the active set $S_{\beta}:=\supp (\beta)$ is a subset of $S$.
 Then it holds true that\\
 \begin{align}\label{eq:theoremm}
   &\ltn{X(\bh-\bo )}+2\delta\ltsr{\epsilon}\left[(\lambda^{*}+\lambda^{m})\Omega(\bh_{S}-\beta)+(\lambda^{*}-\lambda^{m})\Omega^{S^{c}}(\bh_{S^{c}})\right]\nonumber  \\ 
   &\leq \ltn{X(\beta-\bo )}+ \ltsr{\epsilon}^{2}\left[ (1+\delta)(\tilde{\lambda}+\lambda^{m})\right]^{2} \GO^{2} (L_{S},S),
 \end{align}

 with $L_{S}:= \frac{\tilde{\lambda}+\lambda^{m}}{\lambda^{*}-\lambda^{m}}\frac{1+\delta}{1-\delta}$ and
 
 \begin{align*}
   %\lambda^{S} &:=\Ost((\epsilon^{T}X)_{S}/n),        && \lambda^{S^{c}}:=\Ost ^{S^{c}}((\epsilon^{T}X)_{S^{c}}/n),\\
   \lambda^{*} &:=\lambda\left(\frac{1-\frac{\lambda^{0}}{\lambda}(1+2f)}{f+2}\right), &&\tilde{\lambda}\quad:=\lambda(1+f).
 \end{align*}
 
 Furthermore we get the two oracle inequalities
 \begin{align*}
  \ltn{X(\bh-\bo )}&\leq \ltn{X(\beta_{\star}-\bo )}\\
  &\quad+\ltsr{\epsilon}^{2}(1+\delta)^{2}(\tilde{\lambda}+\lambda^{S_{\star}^{c}})^2 \cdot \GO^{2} (L_{S_{\star}},S_{\star})\\
  \Omega(\bh_{S_{\star}}-\beta_{\star})+\Omega^{S_{\star}^{c}}(\bh_{S_{\star}^{c}})&\leq \frac{1}{2\delta\ltsr{\epsilon}}\cdot \frac{\ltn{X(\beta_{\star}-\bo )}}{\lambda^{*}-\lambda^{m}}+...\\
    &\quad +\frac{(1+\delta)^{2}\ltsr{\epsilon}}{2\delta }\cdot \frac{(\tilde{\lambda}+\lambda^{m})^2}{\lambda^{*}-\lambda^{m}} \cdot \GO^{2}(L_{S_{\star}},S_{\star}).
 \end{align*}
 
For all fixed allowed sets $S$ define
$$\beta_{\star}(S):=\amin\limits_{\beta:\text{ }\supp(\beta)\subseteq S}\left( \ltn{X(\beta-\bo )}+ \ltsr{\epsilon}^{2}\left[ (1+\delta)(\tilde{\lambda}+\lambda^{m})\right]^{2} \GO^{2} (L_{S},S)\right).$$
Then $S_{\star}$ is defined as
\begin{small}\begin{align}S_{\star}&:=\amin\limits_{S \text{ allowed}}\left(\ltn{X(\beta_{\star}(S)-\bo )}
                        + \ltsr{\epsilon}^{2}\left[ (1+\delta)(\tilde{\lambda}+\lambda^{m})\right]^{2} \GO^{2} (L_{S},S)\right),\label{e3q}\\
\beta_{\star}&:= \beta_{\star}(S_{\star})\label{e2q}\end{align}\end{small}
it attains the minimal right hand side of the oracle inequality \eqref{eq:theoremm}.
An improtant special case of equation \eqref{eq:theoremm} is to choose $\beta\equiv \beta^{0}$ with $S\supseteq S_{0}$ allowed.
The term $\ltn{X(\beta-\bo )}$ vanishes in this case and only the $\Omega-$effective sparsity term remains for the upper bound. But it is not obvious in which cases and 
whether $\beta_{\star}$ leads to a substantially lower bound than $\beta^{0}$.
\end{theorem}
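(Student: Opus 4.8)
The plan is to run the by-now standard sharp-oracle (two-point) argument, with Lemma~\ref{l00} playing the role of the basic inequality and Lemma~\ref{l3} converting the random factor $\ltsr{\eh}$ into the deterministic quantities $\tilde\lambda=\lambda(1+f)$ and $\lambda^{*}$. I would start from Lemma~\ref{l00} evaluated at the oracle point $\beta$ (which is supported in $S$), multiply by $\ltsr{\eh}$, and substitute $\eh=\epsilon-X(\bh-\bo)$. Expanding $\slt{\eh}{X(\beta-\bh)}$ and applying the polarisation identity $-\slt{X(\bh-\bo)}{X(\beta-\bh)}=\tfrac12\bigl(\ltn{X(\bh-\bo)}+\ltn{X(\bh-\beta)}-\ltn{X(\beta-\bo)}\bigr)$ yields the basic inequality
\begin{align*}
\tfrac12\ltn{X(\bh-\bo)}+\tfrac12\ltn{X(\bh-\beta)}
&\leq \tfrac12\ltn{X(\beta-\bo)}+\slt{\epsilon}{X(\bh-\beta)}\\
&\quad+\ltsr{\eh}\,\lambda\bigl(\Omega(\beta)-\Omega(\bh)\bigr).
\end{align*}

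Next I would bound the two non-quadratic terms. Splitting on $S$ and $S^{c}$ and using the dual norm together with $\beta_{S^{c}}=0$ gives $\slt{\epsilon}{X(\bh-\beta)}\leq\ltsr{\epsilon}\lambda^{m}\bigl[\Omega(\bh_{S}-\beta)+\Omega^{S^{c}}(\bh_{S^{c}})\bigr]$, where I bound $\lambda^{S},\lambda^{S^{c}}\leq\lambda^{m}$; weak decomposability and the triangle inequality give the penalty gap $\Omega(\beta)-\Omega(\bh)\leq\Omega(\bh_{S}-\beta)-\Omega^{S^{c}}(\bh_{S^{c}})$. I then remove $\ltsr{\eh}$ with Lemma~\ref{l3}: the upper bound $\ltsr{\eh}\leq(1+f)\ltsr{\epsilon}$ on the positive term $\Omega(\bh_{S}-\beta)$ (so $\lambda\ltsr{\eh}\leq\tilde\lambda\ltsr{\epsilon}$) and the lower bound $\ltsr{\eh}\geq(\lambda^{*}/\lambda)\ltsr{\epsilon}$ on the negative term $\Omega^{S^{c}}(\bh_{S^{c}})$. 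Writing $A:=\Omega(\bh_{S}-\beta)$, $B:=\Omega^{S^{c}}(\bh_{S^{c}})$ and clearing the factor $\tfrac12$ leaves
$$\ltn{X(\bh-\bo)}+\ltn{X(\bh-\beta)}\leq\ltn{X(\beta-\bo)}+2\ltsr{\epsilon}(\tilde\lambda+\lambda^{m})A-2\ltsr{\epsilon}(\lambda^{*}-\lambda^{m})B.$$
To produce the left-hand side of \eqref{eq:theoremm} I add $2\delta\ltsr{\epsilon}\bigl[(\lambda^{*}+\lambda^{m})A+(\lambda^{*}-\lambda^{m})B\bigr]$ to both sides; on the right the coefficient of $A$ becomes $(\tilde\lambda+\lambda^{m})+\delta(\lambda^{*}+\lambda^{m})\leq(1+\delta)(\tilde\lambda+\lambda^{m})$ by $\lambda^{*}\leq\tilde\lambda$, and that of $B$ becomes $-(1-\delta)(\lambda^{*}-\lambda^{m})\leq0$, legitimate because Remark~\ref{rem:rem9} ensures $\lambda^{*}-\lambda^{m}>0$.

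The decisive step is the case split. If $(1+\delta)(\tilde\lambda+\lambda^{m})A\leq(1-\delta)(\lambda^{*}-\lambda^{m})B$, the noise contribution is jointly nonpositive and \eqref{eq:theoremm} follows at once after discarding $-\ltn{X(\bh-\beta)}\leq0$. Otherwise $B<L_{S}A$ with $L_{S}=\frac{\tilde\lambda+\lambda^{m}}{\lambda^{*}-\lambda^{m}}\frac{1+\delta}{1-\delta}$, i.e.\ $\bh-\beta$ satisfies the $(L_{S},S)$-cone condition; this is exactly where the constant $L_{S}$ is forced, and I expect matching the constants here to be the main bookkeeping obstacle. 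Dropping the nonpositive $B$-term and feeding the cone condition into Lemma~\ref{l2} yields $A\leq\GO(L_{S},S)\ltsr{X(\bh-\beta)}$; substituting and completing the square via $2cb-b^{2}\leq c^{2}$ with $b=\ltsr{X(\bh-\beta)}$ and $c=(1+\delta)\ltsr{\epsilon}(\tilde\lambda+\lambda^{m})\GO(L_{S},S)$ absorbs $\ltn{X(\bh-\beta)}$ and reproduces precisely the $\GO^{2}$ term of \eqref{eq:theoremm}.

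Finally the two displayed oracle inequalities are corollaries. Discarding the nonnegative penalty block on the left of \eqref{eq:theoremm} gives the prediction bound; the slightly sharper factor $\lambda^{S_{\star}^{c}}$ in place of $\lambda^{m}$ is obtained by carrying $\lambda^{S}$ and $\lambda^{S^{c}}$ separately through the empirical-process step rather than bounding both by $\lambda^{m}$. Dropping $\ltn{X(\bh-\bo)}\geq0$ and dividing by $2\delta\ltsr{\epsilon}(\lambda^{*}-\lambda^{m})$ gives the estimation bound in $\Omega$ and $\Omega^{S^{c}}$. The definitions \eqref{e3q}--\eqref{e2q} then merely select, over all $\beta$ supported in a fixed allowed $S$ and over all allowed $S$, the oracle that minimises the right-hand side of \eqref{eq:theoremm}.
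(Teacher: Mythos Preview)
Your proposal is correct and uses exactly the same ingredients as the paper: Lemma~\ref{l00} as the basic inequality, the dual-norm bound on $\slt{\epsilon}{X(\bh-\beta)}$, weak decomposability for the penalty gap, both halves of Lemma~\ref{l3} to trade $\ltsr{\eh}$ for $\tilde\lambda$ and $\lambda^{*}$, Lemma~\ref{l2} under the cone condition, and the AM--GM step. The only organisational difference is the location of the case split and the polarisation identity: the paper splits first on whether $\slt{X(\bh-\bo)}{X(\bh-\beta)}\gtrless -\delta\ltsr{\epsilon}\bigl[(\lambda^{*}+\lambda^{m})A+(\lambda^{*}-\lambda^{m})B\bigr]$ and only polarises at the very end of Case~2, whereas you polarise immediately and split directly on whether the cone condition $B\le L_{S}A$ holds. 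The two orderings are equivalent; yours is slightly more streamlined, while the paper's formulation makes the ``trivial'' case appear before any empirical-process or weak-decomposability work is done. The final derivation of the two displayed oracle inequalities matches the paper's one-line remark verbatim.
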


\begin{proof}
 Let $\beta \in \Rp \text{ and let } S \text{ be an allowed set containing the active set of }\beta$. We need to distinguish 2 cases. The second case is the more substantial one.\\
 $\text{\underline{Case 1:} }$ Assume that\\
 $$\slt{X(\bh-\bo)}{X(\bh-\beta)} \leq -\dell.$$
 Here $\slt{u}{v}:=v^{T}u/n$, for any two vectors $u,v\in \mathbb{R}^{n}$ .
 In this case we can simply use the following calculations to verify the theorem.

   \begin{align*}
     &\ltn{X(\bh-\bo)}-\ltn{X(\beta-\bo)} +...\\
     &\qquad+2\dell\\
     &\qquad\qquad= 2\slt{X(\bh-\bo)}{X(\bh-\beta)}-\ltn{X(\beta-\bh)}\\
     &\qquad\qquad\quad +2\dell\\
     &\qquad\qquad \leq -\ltn{X(\beta-\bh)}\\
     &\qquad\qquad \leq 0
   \end{align*}

 Now we can turn to the more important case.\\
 $\text{\underline{Case 2:} }$ Assume that\\
 $$\slt{X(\bh-\bo)}{X(\bh-\beta)} \geq -\dell.$$
 We can reformulate Lemma \ref{l00} with $Y-X\bh=X(\bo-\bh)+\epsilon$, then we get:
 $$\frac{\slt{X(\bh-\bo)}{X(\bh-\beta)}}{\ltsr{\eh}}+\lambda\Omega(\bh)\leq \frac{\slt{\epsilon}{X(\bh-\beta)}}{\ltsr{\eh}}+\lambda\Omega(\beta).$$
 This is equivalent to
 \begin{equation}\label{uno2}\slt{X(\bh-\bo)}{X(\bh-\beta)}+\ltsr{\eh}\lambda\Omega(\bh)\leq \slt{\epsilon}{X(\bh-\beta)}+\ltsr{\eh}\lambda\Omega(\beta).\end{equation}
 By the definition of the dual norm and the generalized Cauchy-Schwartz inequality we have
 \begin{align*}\slt{\epsilon}{X(\bh-\beta)}&\leq \ltsr{\epsilon}\left(\lambda^{S}\Omega(\bh_{S}-\beta)+\lambda^{S^{c}}\Omega^{S^{c}}(\bh_{S^{c}})\right)\\
                                           &\leq \ltsr{\epsilon}\left(\lambda^{m}\Omega(\bh_{S}-\beta)+\lambda^{m}\Omega^{S^{c}}(\bh_{S^{c}})\right)
 \end{align*}
 
 Inserting this inequality into \eqref{uno2} we get
 \begin{align}\slt{X(\bh-\bo)}{X(\bh-\beta)}+\ltsr{\eh}\lambda\Omega(\bh)&\leq\ltsr{\epsilon}\left(\lambda^{m}\Omega(\bh_{S}-\beta)+\lambda^{m}\Omega^{S^{c}}(\bh_{S^{c}})\right)\nonumber\\
                                                           &\quad +\ltsr{\eh}\lambda\Omega(\beta).\label{uno4}\end{align}
 Then by the weak decomposability and the triangle inequality in \eqref{uno4}
 
 $$\slt{X(\bh-\bo)}{X(\bh-\beta)}+\ltsr{\eh}\lambda\left(\Omega(\bh_{S})+\Omega^{S^{c}}(\bh_{S^{c}})\right)$$
 \begin{equation}\leq \ltsr{\epsilon}\left(\lambda^{m}\Omega(\bh_{S}-\beta)+\lambda^{m}\Omega^{S^{c}}(\bh_{S^{c}})\right)+\ltsr{\eh}\lambda\left(\Omega(\bh_{S}-\beta)+\Omega(\bh_{S})\right).\label{uno5}\end{equation}
 By inserting the assumption of case 2
 $$\slt{X(\bh-\bo)}{X(\bh-\beta)} \geq -\dell, $$
 into \eqref{uno5} we get\begin{small}
 $$\Big(\lambda\ltsr{\eh}-\lambda^{m}\ltsr{\epsilon}-\delta\ltsr{\epsilon}(\lambda^{*}-\lambda^{m})\Big)\Omega^{S^{c}}(\bh_{S^{c}})\leq \left(\lambda\ltsr{\eh}+\lambda^{m}\ltsr{\epsilon}
 +\delta\ltsr{\epsilon}(\tilde{\lambda}+\lambda^{m})\right)\Omega(\bh_{S}-\beta).$$\end{small}
 By assumption $a\lambda^{m}<\lambda$ we have that $\lambda^{*}>\lambda^{m}$ (see Remark \ref{rem:rem9}) and therefore
 $$\Omega^{S^{c}}(\bh_{S^{c}})\leq \left(\frac{\tilde{\lambda}+\lambda^{m}}{\lambda^{*}-\lambda^{m}} \right){\cdot}\frac{1+\delta}{1-\delta}\cdot\Omega(\bh_{S}-\beta).$$
 We have applied Lemma \ref{l3} in the last step, in order to replace the estimate $\ltsr{\eh}$ with $\ltsr{\epsilon}$. By the definition of $L_{S}$ we have
 \begin{equation}\Omega^{S^{c}}(\bh_{S^{c}})\leq L_{S}\Omega(\bh_{S}-\beta).\label{uno6}\end{equation}
  Therefore with Lemma \ref{l2} we get
  \begin{equation}\Omega(\bh_{S}-\beta)\leq \GO(L_{S},S)\ltsrr{X(\bh-\beta)}.\label{uno7}\end{equation}
 
 Inserting \eqref{uno7} into \eqref{uno5}, together with Lemma \ref{l3} and $\delta<1$, we get
 \begin{align*}&\slt{X(\bh-\bo)}{X(\bh-\beta)}+\delta\ltsr{\epsilon}(\lambda^{*}-\lambda^{m})\Omega^{S^{c}}(\bh_{S^{c}}) \\
                                            &\leq (1+\delta-\delta)\ltsr{\epsilon}(\lambda\ltsr{\eh}/\ltsr{\epsilon}+\lambda^{m})\Omega(\bh_{S}-\beta)\\
                                            &\leq (1+\delta)\ltsr{\epsilon}(\tilde{\lambda}+\lambda^{m})\GO(L_{S},S)\ltsrr{X(\bh-\beta)}-\delta\ltsr{\epsilon}(\lambda^{*}+\lambda^{m})\Omega(\bh_{S}-\beta)
 \end{align*}
 Because $\forall u,v \in \mathbb{R}, \text{ }0\leq(u-v)^{2}$ it holds true that $uv\leq 1/2(u^{2}+v^{2})$.

 Therefore with $a=(1+\delta)\ltsr{\epsilon}(\tilde{\lambda}+\lambda^{m})\GO(L_{S},S)$ and $b= \ltsrr{X(\bh-\beta)}$ we have
 \begin{align*}
 &\slt{X(\bh-\bo)}{X(\bh-\beta)}+\delta\ltsr{\epsilon}(\lambda^{*}-\lambda^{m})\Omega(\bh_{S^{c}})^{S^{c}}+\delta\ltsr{\epsilon}(\lambda^{*}+\lambda^{m})\Omega(\bh_{S}-\beta) \\
 &\leq \frac{1}{2}(1+\delta)^{2}\ltsr{\epsilon}^{2}(\tilde{\lambda}+\lambda^{m})^{2}\GO^{2}(L_{S},S)+\frac{1}{2}\ltn{X(\bh-\beta)}. 
 \end{align*}
 Since
 $$2\slt{X(\bh-\bo)}{X(\bh-\beta)}=\ltn{X(\bh-\beta^{0})}-\ltn{X(\beta-\beta^{0})}+\ltn{X(\bh-\beta)},$$
 we get
 \begin{align}\ltn{X(\bh-\beta^{0})}&+2\delta\ltsr{\epsilon}\left((\lambda^{*}-\lambda^{m})\Omega(\bh_{S^{c}})^{S^{c}}+(\lambda^{*}+\lambda^{m})\Omega(\bh_{S}-\beta)\right)\nonumber\\
&\leq  (1+\delta)^{2}\ltsr{\epsilon}^{2}(\tilde{\lambda}+\lambda^{m})^{2}\GO^{2}(L_{S},S)+\ltn{X(\beta-\beta^{0})}.\label{eq:total}\end{align}
 This gives the sharp oracle inequality. The two oracle inequalities mentioned are just a split up version of inequality \eqref{eq:total}, where for the second oracle inequality we need
 to see that $\lambda^{*}-\lambda^{m}\leq \lambda^{*}+\lambda^{m}$.
\end{proof}

Remark that the sharpness in the oracle inequality of Theorem \ref{th2} is the constant one in front of the term $\ltn{X(\beta-\bo )}$. 
Because we measure a vector on $S_{\star}$ by $\Omega$ and on the inactive set $S_{\star}^{c}$ by the norm $\Omega^{S^{c}}$, we take here $\Omega(\bh_{S_{\star}}-\beta_{\star})$ and $\Omega^{S_{\star}^{c}}(\bh_{S_{\star}^{c}})$
as estimation errors.\\
If we choose $\lambda$ of the same order as $\lambda^{m}$ (i.e. $a\lambda=\lambda^{m}$, with $a>0$ a constant), then we can simplify the oracle inequalities. 
This is comparable to the oracle inequalities for the LASSO, see for example \citet{laso1}, \citet{laso2}, \citet{laso3}, \citet{sara4} and further references can be found in \citet{sara3}.
\begin{corollary}\label{th3}
Take $\lambda$ of the order of $\lambda^{m}$ (i.e. $\lambda^{m}=C\lambda$, with 
%$(\sqrt{4f^{2}+4f+2}-(2f+1))>a>0$ 
$0<C<\frac{1}{3(f+1)}$ a constant). 
Invoke the same assumptions as in Theorem \ref{th2}. Here we also use the same notation of an optimal $\beta_{\star}$ with $S_{\star}$ as in equation \eqref{e3q} and \eqref{e2q}.
Then we have
 \begin{align*}
  \ltn{X(\bh-\bo )}&\leq  \ltn{X(\beta_{\star}-\bo )}+C_{1}\lambda^2 \cdot \GO^{2} (L_{S_{\star}},S_{\star})\\
  \Omega(\bh_{S_{\star}}-\beta_{\star})+\Omega^{S_{\star}^{c}}(\bh_{S_{\star}^{c}})&\leq C_{2}\left(\frac{\ltn{X(\beta_{\star}-\bo )}}{\lambda} + C_{1}\lambda \cdot \GO^{2} (L_{S_{\star}},S_{\star})\right).
 \end{align*}
Here $C_{1}$ and $C_{2}$ are the constants:
\begin{align}
\label{eq:c1}C_{1}&:=(1+\delta)^{2}\cdot \ltsr{\epsilon}^{2}(f+C+1)^{2},\\
\label{eq:c2}C_{2}&:=\frac{1}{2\delta\ltsr{\epsilon}}\cdot \frac{1}{\sqrt{1-2C(1+2f)}-C}.\end{align}
\end{corollary}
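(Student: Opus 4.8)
The plan is to treat Corollary \ref{th3} as a direct specialization of Theorem \ref{th2} to the regime $\lambda^{m}=C\lambda$, so that the whole argument is essentially bookkeeping: substitute $\lambda^{m}=C\lambda$ and $\tilde{\lambda}=(1+f)\lambda$ into the two oracle inequalities of Theorem \ref{th2} and regroup the constants. First I would check that the hypotheses are compatible. Theorem \ref{th2} requires $a\lambda^{m}<\lambda$ with $a=3(1+f)$, and with $\lambda^{m}=C\lambda$ this reads $3(1+f)C<1$, i.e.\ exactly $C<\tfrac{1}{3(f+1)}$, which is the standing assumption of the corollary. Hence Theorem \ref{th2} and Remark \ref{rem:rem9} apply verbatim; in particular $\lambda^{*}>\lambda^{m}$, so every denominator $\lambda^{*}-\lambda^{m}$ appearing below is strictly positive.

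For the prediction inequality I would start from the first oracle bound of Theorem \ref{th2}. Since $\lambda^{m}=\max(\lambda^{S},\lambda^{S^{c}})$ we have $\lambda^{S_{\star}^{c}}\le\lambda^{m}=C\lambda$, and together with $\tilde{\lambda}=(1+f)\lambda$ this gives $\tilde{\lambda}+\lambda^{S_{\star}^{c}}\le(1+f+C)\lambda$. Squaring and inserting into the effective-sparsity term turns the prefactor $\ltsr{\epsilon}^{2}(1+\delta)^{2}(\tilde{\lambda}+\lambda^{S_{\star}^{c}})^{2}$ into $(1+\delta)^{2}\ltsr{\epsilon}^{2}(f+C+1)^{2}\lambda^{2}=C_{1}\lambda^{2}$, which is exactly the claimed bound with $C_{1}$ as in \eqref{eq:c1}.

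For the estimation inequality I would use the second oracle bound of Theorem \ref{th2}. Here $(\tilde{\lambda}+\lambda^{m})^{2}=(1+f+C)^{2}\lambda^{2}$, so once a single factor $1/\lambda$ is pulled out, the effective-sparsity term carries precisely the factor $C_{1}\lambda$ while the residual term carries $\ltn{X(\beta_{\star}-\bo)}/\lambda$; both are then multiplied by the common factor $\tfrac{1}{2\delta\ltsr{\epsilon}}\cdot\tfrac{\lambda}{\lambda^{*}-\lambda^{m}}$. It remains to identify this common factor with $C_{2}$, which amounts to lower bounding $(\lambda^{*}-\lambda^{m})/\lambda=\lambda^{*}/\lambda-C$. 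I would do this by inserting $\lambda^{0}\le\lambda^{m}=C\lambda$ into the definition of $\lambda^{*}/\lambda$, thereby producing the positive denominator $\sqrt{1-2C(1+2f)}-C$ of $C_{2}$ in \eqref{eq:c2}.

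The only genuinely delicate point is this last lower bound on $\lambda^{*}-\lambda^{m}$. One must re-examine the lower bound on $\ltsr{\eh}/\ltsr{\epsilon}$ coming from Lemma \ref{l3} in this specific regime (recall that in the proof of Lemma \ref{l3} the underlying quadratic estimate was simplified), so that after substituting $\lambda^{0}\le C\lambda$ the denominator of $C_{2}$ comes out in the stated clean square-root form. The accompanying check is that the assumption $C<\tfrac{1}{3(f+1)}$ forces $\sqrt{1-2C(1+2f)}-C>0$, guaranteeing that $C_{2}$ is a finite positive constant; this ties back to the inequality $\lambda^{*}>\lambda^{m}$ of Remark \ref{rem:rem9}. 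Everything else in the corollary is pure substitution and regrouping of the constants already isolated in Theorem \ref{th2}.
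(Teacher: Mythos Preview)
The paper itself gives no proof of this corollary, so your plan of deriving it from Theorem~\ref{th2} is the intended approach, and for the prediction bound with $C_{1}$ it works exactly as you say: $\tilde\lambda+\lambda^{S_\star^c}\le(1+f+C)\lambda$ gives the factor $C_{1}\lambda^{2}$ immediately.

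The gap is in your treatment of $C_{2}$. You write that inserting $\lambda^{0}\le C\lambda$ into the definition of $\lambda^{*}/\lambda$ produces $\sqrt{1-2C(1+2f)}-C$; it does not. The definition is $\lambda^{*}/\lambda=\bigl(1-\tfrac{\lambda^{0}}{\lambda}(1+2f)\bigr)/(f+2)$, so direct substitution only yields $(\lambda^{*}-\lambda^{m})/\lambda\ge\bigl(1-3C(1+f)\bigr)/(f+2)$, which is strictly smaller than $\sqrt{1-2C(1+2f)}-C$ (check $C=0$). Hence the constant $C_{2}$ stated in the paper is \emph{sharper} than anything a straight specialization of Theorem~\ref{th2} can give. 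Your fallback suggestion --- solving the quadratic in the proof of Lemma~\ref{l3} more carefully --- does not land on the stated form either: that route is based on the triangle inequality $\ltsr{\eh}\ge\ltsr{\epsilon}-\ltsr{X(\bh-\bo)}$ and produces yet another expression.

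What actually gives the square-root is a different (and simpler) argument. Expand the square
\[
\ltn{\eh}=\ltn{\epsilon}-2\slt{\epsilon}{X(\bh-\bo)}+\ltn{X(\bh-\bo)}\ \ge\ \ltn{\epsilon}-2\slt{\epsilon}{X(\bh-\bo)},
\]
then use the dual-norm bound $|\slt{\epsilon}{X(\bh-\bo)}|\le\lambda^{0}\ltsr{\epsilon}\,\Omega(\bh-\bo)$ together with $\Omega(\bh-\bo)\le\Omega(\bh)+\Omega(\bo)\le\ltsr{\epsilon}/\lambda+2\Omega(\bo)=(1+2f)\ltsr{\epsilon}/\lambda$ (the latter from the definition of $\bh$). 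This gives
\[
\ltn{\eh}\ \ge\ \ltn{\epsilon}\Bigl(1-2\tfrac{\lambda^{0}}{\lambda}(1+2f)\Bigr)\ \ge\ \ltn{\epsilon}\bigl(1-2C(1+2f)\bigr),
\]
i.e.\ $\ltsr{\eh}/\ltsr{\epsilon}\ge\sqrt{1-2C(1+2f)}$. Rerunning the proof of Theorem~\ref{th2} with this sharper lower bound in place of $\lambda^{*}/\lambda$ then yields exactly the stated $C_{2}$. Note also that your positivity check is too optimistic: $C<\tfrac{1}{3(f+1)}$ alone does not force $1-2C(1+2f)>0$ once $f\ge 1$, so the stated $C_{2}$ implicitly requires $2C(1+2f)<1$ as well.
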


First let us explain some of the parts of Theorem \ref{th2} in more detail. We can also study what happens to the bound if we additionally assume Gaussian errors, see Proposition \ref{proposition1}.\\

\textbf{On the two parts of the oracle bound:}\\
The oracle bound is a trade-off between two parts, which we will discuss now.
Let us first remember that if we set $\beta\equiv\beta^{0}$ in the sharp oracle bound, only the term with the $\Omega-$effective sparsity will not vanish on the right hand side of the bound. 
But due to the minimization over $\beta$ in the definition of $\beta_{\star}$ we might even do better than that bound.\\
The \underline{first part} consisting of minimizing $\ltn{X(\beta-\bo )}$ can be thought of the error made due to approximation, hence we call it the approximation error.
If we fix the support $S$, which can be thought of being determined by the second part, then minimizing $\ltn{X(\beta-\bo )}$ is 
just a projection onto the subspace spanned by $S$, see Figure \ref{projection}. So if $S$ has a similar structure
than the true unknown support $S_{0}$ of $\beta^{0}$, this will be small.\\

\begin{figure}[H]
\centering
  \includegraphics[width = 8cm]{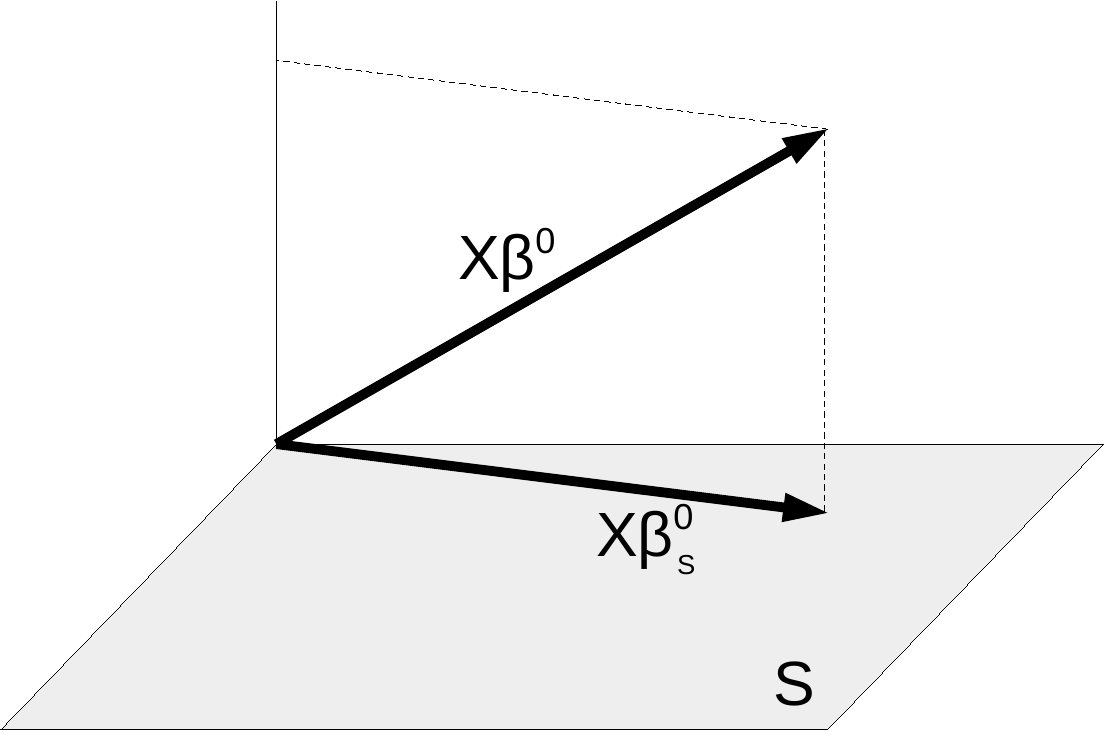}
\caption{approximation error}
\label{projection}
\end{figure}

The \underline{second part} containing $\GO^{2} (L_{S},S)$ is due to estimation errors. There, minimizing over $\beta$ will affect the set $S$.
We have already mentioned that. It is one over the squared distance between the two sets
$\{X\beta_{S}: \Omega(\beta_{S})=1\}$ and $\{X\beta_{S^{c}}: \Omega^{S^{c}}(\beta_{S^{c}})\leq L\}$. Figure \ref{eff} shows this distance.
This means that if the vectors in $X_{S}$ and $X_{S^{c}}$ show
a high correlation the distance will shrink and the $\Omega-$effective sparsity will blow up, which we try to avoid. This distance depends also on the two chosen sparsity norms
$\Omega$ and $\Omega^{S^{c}}$. It is crucial to choose norms that reflect the true underlying sparsity in order to get a good bound. Also the constant $L_{S}$ should be small.\\

\begin{figure}[H]
\centering
  \includegraphics[width = 6cm]{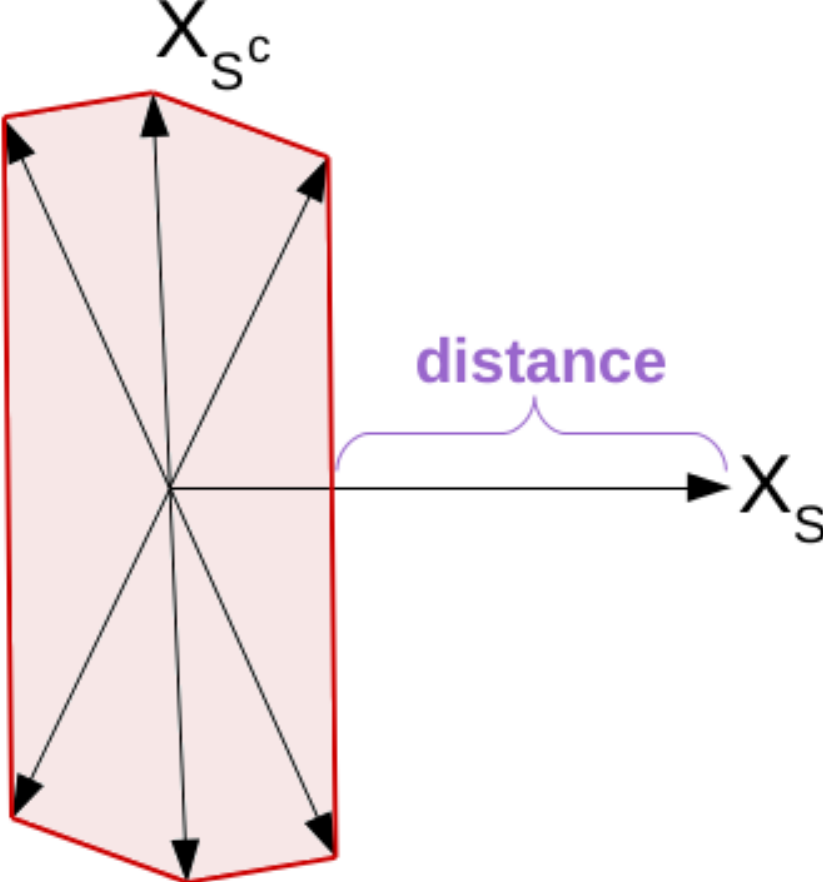}
\caption{The $\Omega$-eigenvalue}
\label{eff}
\end{figure}

%\begin{remark}
% Theorem \ref{th1} is a corollary of Theorem \ref{th2}. Because
% for any $a\geq 0, b\geq 0, c\geq 0$ real numbers that satisfy $a^2\leq b^2 + c^2$ it holds that
% $$a\leq \sqrt{b^2+c^2}\leq \sqrt{(b+c)^2}\leq b+c.$$
% By taking $\delta=0$ in Theorem \ref{th2}, with $a^2=\ltn{X(\bh-\bo )},\text{ } b^2=\ltn{X(\beta-\bo )}, \text{ }$\\
% $c^2=\left[\lambda\ltsr{\eh}+\lambda^{S}\right]^{2} \GO^{2} (L_{S},S),$
% we get Theorem \ref{th1}.
% We will still give a Proof of Theorem \ref{th1}, so that we can give a simpler version of the proof of Theorem \ref{th2}, and then extend the ideas in the proof of Theorem \ref{th2}.
%\end{remark}

\textbf{On the randomness of the oracle bound:}\\
Until now, the bound still contains some random parts, for example in $\lambda^{m}$. In order to get rid of that random part we need to introduce the following sets
$$\mathcal{T}:= \left\{\max\left(\frac{\Omega^{*}((\epsilon^{T}X)_{W})}{n\ltsr{\epsilon}}, \frac{\Omega^{W^{c}*}((\epsilon^{T}X)_{W^{c}})}{n\ltsr{\epsilon}}\right)\leq d \right\},$$
$\text{ where } d\in \mathbb{R}, \text{ and any allowed set }W.$
We need to choose the constant $d$ in such a way, that we have a high probability for this set. In other words we try to bound the random part by a non random constant with a very high probability.
In order to do this we need some assumptions on the errors. Here we assume Gaussian errors. Let us also remark that $\frac{\Omega^{*}((\epsilon^{T}X)_{W})}{n\ltsr{\epsilon}}$ is 
normalized by $\ltsr{\epsilon}$.
This normalization occurs due to the special form of the Karush-Kuhn-Tucker conditions. Thus the square root of the residual sum of
squared errors is responsible for this normalization. In fact, this normalization is the main reason why $\lambda$ does not contain the unknown variance. So the square root part of the estimator
makes the estimator pivotal.
Now in the case of Gaussian errors, we can use the concentration inequality from Theorem 5.8 in \citet{lugosi} and get the following proposition.
Define first:
\begin{table}[H]
\centering
\label{my-label}
\begin{tabular}{llll}
 $Z_{1}:=$&$\frac{\Omega^{*}((\epsilon^{T}X)_{W})}{n\ltsr{\epsilon}}$&$V_{1}:=$&$Z_{1}\ltsr{\epsilon}/\sigma$\\
 $Z_{2}:=$&$\frac{\Omega^{W^{c}*}((\epsilon^{T}X)_{W^{c}})}{n\ltsr{\epsilon}}$&$V_{2}:=$&$Z_{2}\ltsr{\epsilon}/\sigma$   \\
 $Z\text{ }:=$&$\max(Z_{1},Z_{2})$&$V\text{ }:=$& $\max(V_{1},V_{2})$
\end{tabular}
\end{table}
%$$Z_{1}:=\frac{\Omega^{*}((\epsilon^{T}X)_{W})}{n\ltsr{\epsilon}},         \text{ }\text{ }V_{1}:= Z_{1}\ltsr{\epsilon}/\sigma,$$
%$$Z_{2}:=\frac{\Omega^{W^{c}*}((\epsilon^{T}X)_{W^{c}})}{n\ltsr{\epsilon}}, \text{ }\text{ }\text{ }V_{2}:= Z_{2}\ltsr{\epsilon}/\sigma \text{ and}$$
%$$Z:=\max(Z_{1},Z_{2}), \text{ }\text{ }\text{ }\text{ }\text{ }\text{ }\text{ }\text{ }V:=\max(V_{1},V_{2}).$$
\begin{proposition}\label{proposition1}
  Suppose that we have $i.i.d.$ Gaussian errors $\epsilon\sim\mathcal{N}(0,\sigma^{2}I)$, and that the following normalization $(X^{T}X/n)_{i,i}=1, \forall i\in \{1,...,p\}$ holds true.
  Let $B:=\{z\in\mathbb{R}^{p}: \Omega(z)\leq 1\}$ be the unit $\Omega-$ball, and $B_{2}:=\sup_{b\in B}b^{T}b$ an $\Omega-$ball and $\ell_{2}-$ball comparison.
  Then we have for all $d>\E V$ and $\Delta>1$
  $$\Prob (\mathcal{T})\geq 1-2e^{-\frac{(d-\E V)^2\Delta^{2}}{2 B_{2}/n}}-2e^{-\frac{n}{4}(1-\Delta^{2})^{2}}.$$
\end{proposition}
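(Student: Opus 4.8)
The plan is to first recognise that $\mathcal{T}$ is exactly the event $\{Z\le d\}$, so it suffices to bound $\Prob(Z>d)$ from above. The structural fact that drives everything is the scale decomposition $Z_i=V_i\,\sigma/\ltsr{\epsilon}$, hence $Z=V\,\sigma/\ltsr{\epsilon}$ with $V=\max(V_1,V_2)$ \emph{scale-free}: writing $g:=\epsilon/\sigma\sim\N(0,I)$, homogeneity of $\Omega^{*}$ and $\Omega^{W^{c}*}$ gives $V_1=\Omega^{*}((g^{T}X)_{W})/n$ and $V_2=\Omega^{W^{c}*}((g^{T}X)_{W^{c}})/n$, both functions of $g$ alone and independent of $\sigma$. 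This is the whole point of the square-root normalization: the unknown variance cancels, and the only remaining random scale is $\ltsr{\epsilon}/\sigma=\lt{g}/\sqrt{n}$, a pure chi-type quantity concentrating about $1$.

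Next I would split the target event using the free parameter $\Delta$, separating a numerator event that controls $V$ from a normalization event that controls $\ltsr{\epsilon}/\sigma$. On the event that $\ltsr{\epsilon}^{2}/\sigma^{2}=\lt{g}^{2}/n$ stays close to its mean $1$ (the closeness being quantified by $\Delta$), the inequality $Z\le d$ reduces to a deterministic inequality of the form $V\le(\text{const}\cdot d)$; off that event one simply pays the chi-type deviation probability. Thus $\{Z>d\}\subseteq\{V>\text{threshold}\}\cup\{\ltsr{\epsilon}^{2}/\sigma^{2}\ \text{deviates from }1\}$, and a union bound reduces the proposition to two concentration estimates.

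For the numerator event I would apply the Gaussian concentration of suprema (Theorem 5.8 in \citet{lugosi}) separately to $V_1$ and to $V_2$, which is the source of the first factor $2$. Each $V_i$ is the supremum of a centred Gaussian process indexed by the unit $\Omega$- (resp.\ $\Omega^{W^{c}}$-) ball, with values $\slt{Xb}{g}$ for $b\in B$ supported on $W$ (resp.\ $W^{c}$), so Theorem 5.8 yields a sub-Gaussian tail about $\E V_i$ with weak variance $\sup_{b\in B}\lt{Xb}^{2}/n^{2}$. The key computation is to bound this weak variance by $B_2/n$: the dual-norm/$\ell_2$ comparison $\Omega^{*}(v)\le\sqrt{B_2}\,\lt{v}$ together with the column normalization $(X^{T}X/n)_{ii}=1$ converts $\sup_{b\in B}\lt{Xb}^{2}/n^{2}$ into $B_2/n$, which after inserting the calibrated threshold produces the exponent $(d-\E V)^{2}\Delta^{2}/(2B_2/n)$. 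For the normalization event I would use a chi-square (equivalently, $1$-Lipschitz Gaussian) concentration of $\lt{g}^{2}/n$ about $1$, which yields the second exponential $e^{-\frac{n}{4}(1-\Delta^{2})^{2}}$ together with its factor $2$ from the two-sided bound.

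The step I expect to be the main obstacle is the quantitative bookkeeping of the $\Delta$-split: one must calibrate the single parameter $\Delta$ so that it simultaneously produces the stated threshold inside the Gaussian tail for $V$ and the stated chi-square deviation level, all while keeping the implication $\{V\le\text{threshold}\}\cap\{\ltsr{\epsilon}/\sigma\ \text{controlled}\}\subseteq\{Z\le d\}$ exact. Relatedly, the identification of the weak variance with $B_2/n$ is the only place where the geometry of $\Omega$ (through the $\Omega$-ball/$\ell_2$-ball comparison $B_2$) and the design normalization genuinely enter, and it must be carried out carefully, since a crude operator-norm bound would replace $B_2/n$ by a larger, design-dependent quantity and spoil the clean constant.
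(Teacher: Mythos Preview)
Your proposal is correct and follows essentially the same route as the paper: the scale decomposition $Z_i=V_i\,\sigma/\ltsr{\epsilon}$, a $\Delta$-split into a Gaussian-suprema tail for each $V_i$ (Theorem~5.8 of \citet{lugosi}, with weak variance bounded by $B_2/n$ via the column normalization) and a chi-type lower tail for $\ltsr{\epsilon}/\sigma$ (Laurent--Massart), followed by a union bound. One minor correction: only the one-sided event $\{\ltsr{\epsilon}/\sigma\le\Delta\}$ is needed, and the factor $2$ in front of the chi-square exponential arises not from a two-sided bound but from applying the split separately to $Z_1$ and $Z_2$ before union-bounding.
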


\begin{proof}
Let us define $\Sigma^{2}:=\sup\limits_{b\in B}\E{\left(\frac{(\epsilon^{T}X)_{W}b}{n\sigma}\right)^{2}}$ and calculate it
 \begin{align*}
  \Sigma^{2}&=\sup_{b\in B}\Var\left(\frac{(\epsilon^{T}X)_{W}b}{n\sigma}\right)\\
  &=\sup_{b\in B}\Var\left(\sum_{w\in W}\sum_{i =1}^{n}\frac{\epsilon_{i}}{n}X_{wi}b_{w}\right)\frac{1}{\sigma^{2}}\\
  &=\sup_{b\in B}\left(\sum_{w\in W}b_{w}^{2}\sum_{i =1}^{n}X_{wi}^{2}\Var\left(\frac{\epsilon_{i}}{n}\right)\right)\frac{1}{\sigma^{2}}\\
  &=\sup_{b\in B}b_{W}^{T}\cdot b_{W}/n \sum_{i =1}^{n}X_{wi}^{2}/n\\
  &=\sup_{b\in B}b_{W}^{T}\cdot b_{W}/n \leq B_{2}/n.\tageq\label{e:sil}
 \end{align*}
 These calculations hold true as well for $W^{c}$ instead of $W$. Furthermore in the subsequent inequalities we can subsitute $W$ with $W^{c}$ and use $Z_{2},V_{2}$ instead of $Z_{1},V_{1}$ to 
 get an analogous result.
 We have $\frac{(\epsilon^{T}X)_{W}b}{\sigma n} \sim\mathcal{N}(0,b_{W}^{2}/n)$. This is an almost surely continuous centred Gaussian process. 
 Therefore we can apply Theorem 5.8 from \citet{lugosi}
 \begin{equation}\label{e:lug}\Prob(V_{1}-\E V_{1}\geq c)\leq e^{-\frac{c^2}{2B_{2}/n}}.\end{equation}
 Now to get to a probability inequality for $Z_{1}$ we use the following calculations
 \begin{align*}
  \Prob\left(Z_{1}-\E V_{1} \geq d\right)&\leq \Prob\left(\frac{V_{1}\sigma}{\ltsr{\epsilon}}-\E V_{1} \geq d\wedge \ltsr{\epsilon}>\sigma \Delta \right)+\Prob(\ltsr{\epsilon}\leq \sigma\Delta)   \\ 
                                               &\leq \Prob\left(V_{1}-\E V_{1} \Delta  > d\Delta  \right)+\Prob(\ltsr{\epsilon}\leq \sigma\Delta)\\
                                               &\leq \Prob\left(V_{1}-\E V_{1}  > d\Delta  \right)+\Prob(\ltsr{\epsilon}\leq \sigma\Delta)\\
                                               &\leq e^{-\frac{d^2\Delta^{2}}{2 B_{2}/n}} +\Prob(\ltsr{\epsilon}\leq \sigma\Delta).\tageq\label{e:sil3} 
 \end{align*}
 The calculations above use the union bound and that a bigger set containing another set has a bigger probability. 
 Furthermore we have applied equations \eqref{e:sil} and \eqref{e:lug}.
 Now we are left to give a bound on $\Prob(\ltsr{\epsilon}/\sigma \leq \Delta)$. For this we use the corollary to Lemma 1 from \citet{laurent} together 
 with the fact that $\ltsr{\epsilon}/\sigma=\sqrt{R/n}$ with $R=\sum_{i=1}^{n}(\epsilon_{i}/\sigma)^{2}\sim\chi^{2}(n)$. We obtain
 \begin{align*}
  \Prob\left(R\leq n-2\sqrt{nx}\right)&\leq \exp(-x)\\
  \Prob\left(\sqrt{\frac{R}{n}}\leq \sqrt{1-2\sqrt{\frac{x}{n}}}\right)&\leq \exp(-x)\\
  \Prob\left(\frac{\ltsr{\epsilon}}{\sigma}\leq \Delta\right)&\leq e^{-\frac{n}{4}(1-\Delta^{2})^{2}}.\tageq\label{e:sil2} 
 \end{align*}

 Combining equations \eqref{e:sil3} and \eqref{e:sil2} finishes the proof:
 \begin{align*}
  \Prob(\mathcal{T})&=\Prob(\max(Z_{1},Z_{2})\leq d)\\
  &=\Prob(Z_{1}\leq d\cap Z_{2}\leq d) \\
  &\geq \Prob(Z_{1}\leq d)+\Prob(Z_{2}\leq d)-1\\
  &\geq 1-\Prob(Z_{1}\geq d)-\Prob(Z_{2}\geq d)\\
  &\geq 1-2e^{-\frac{(d-\E V)^2\Delta^{2}}{2 B_{2}/n}}-2e^{-\frac{n}{4}(1-\Delta^{2})^{2}}.
 \end{align*}
\end{proof}

So the probability that the event $\mathcal{T}$ does not occur decays exponentially. This is what we mean by having a very high probability.
Therefore we can take $d=t\cdot \sqrt{\frac{\frac{2}{n}B_{2}}{\Delta^{2}}}+\E\left[ V\right]$ with $\Delta^{2}=1-t\frac{2}{\sqrt{n}}$, 
where $t=\sqrt{\log\left(\frac{4}{\alpha}\right)}$ and $2 e^{-n/2}<\alpha$ to ensure $\Delta^{2}>0$. With this we get
\begin{equation}\label{eq:proob}\Prob (\mathcal{T})\geq 1-\alpha.\end{equation}
First remark that the term $\frac{\epsilon^{T}}{\sigma}$ is now of the right scaling, because $\epsilon_{i}/\sigma\sim \mathcal{N}(0,1)$. This is the whole point of the square root regularization.

Here $B_{2}$ can be thought of comparing the $\Omega-$ball in direction $W$ to the $\ell_{2}-$ball in direction $W$, because if the norm $\Omega$ is the $\ell_{2}-$norm, then $B_{2}=1$. 
Moreover, for every norm there exists a constant $D$ such that for all $\beta$  it holds
$$\lVert \beta \rVert_{2} \leq D\Omega(\beta).$$
Therefore the $B_{2}$ of $\Omega$ satisfies 
$$B_{2}\leq D^{2}\sup_{b\in B}\Omega(b_{W})^{2}\leq D^{2}.$$
Thus we can take
$$\boxed{d=t\cdot \frac{D}{\Delta}\sqrt{\frac{2}{n}}+\E\left[V\right]}$$
$$\boxed{\Delta^{2}=1-t\sqrt{\frac{2}{n}}, \text{ with }t=\sqrt{\log\left(\frac{4}{\alpha}\right)}}.$$
What is left to be determined is $\E\left[V\right]$. In many cases we can use a adjusted version of the main theorem in \citet{mi3} for Gaussian complexities to obtain this expectation. 
All the examples below can be calculated in this way.
So, in the case of Gaussian errors, we have the following new version of Corollary \ref{th3}.
\begin{corollary}
 Take $\lambda= t/\Delta \cdot D\sqrt{\frac{2}{n}}+\E\left[V\right]$, where $t,\delta,V$ and $D$ are defined as above. 
 Invoke the same assumptions as in Theorem \ref{th2} and additionally assume Gaussian errors. Use the notation from Corollary \ref{th3}.
 Then with probability $1-\alpha$ the following oracle inequalities hold true
 \begin{align*}
  \ltn{X(\bh-\bo )}&\leq  \ltn{X(\beta_{\star}-\bo )}+C_{1}\lambda^2 \cdot \GO^{2} (L_{S_{\star}},S_{\star}) \\
  \Omega(\bh_{S_{\star}}-\beta_{\star})+\Omega^{S_{\star}^{c}}(\bh_{S_{\star}^{c}})&\leq C_{2}\left(\frac{\ltn{X(\beta_{\star}-\bo )}}{\lambda} + C_{1}\lambda \cdot \GO^{2} (L_{S_{\star}},S_{\star})\right).
 \end{align*}\label{coro1}
\end{corollary}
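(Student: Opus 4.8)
The plan is to combine the purely deterministic oracle inequality of Corollary~\ref{th3} with the probabilistic control of the random penalty level supplied by Proposition~\ref{proposition1}. Corollary~\ref{th3} already delivers exactly the two displayed inequalities, but under the deterministic hypothesis $\lambda^{m}=C\lambda$ (equivalently the requirement $a\lambda^{m}<\lambda$ of Theorem~\ref{th2}), and with $C_{1},C_{2}$ still carrying the data-dependent quantity $\lambda^{m}$ through $L_{S_{\star}}$ and the definitions \eqref{eq:c1} and \eqref{eq:c2}. Hence the only thing that remains is to pass from the random $\lambda^{m}$ to a fixed, computable penalty level, and to verify that the required inequality between $\lambda^{m}$ and $\lambda$ holds on a set of probability at least $1-\alpha$.

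First I would invoke Proposition~\ref{proposition1} together with the explicit tuning worked out after it: taking $\lambda$ equal to the boxed value $d=t\,\frac{D}{\Delta}\sqrt{2/n}+\E[V]$, with $\Delta^{2}=1-t\sqrt{2/n}$ and $t=\sqrt{\log(4/\alpha)}$, and using the norm-comparison bound $B_{2}\le D^{2}$, equation~\eqref{eq:proob} guarantees $\Prob(\mathcal{T})\ge 1-\alpha$. On the event $\mathcal{T}$ the two random quantities $\Omega^{*}((\epsilon^{T}X)_{W})/(n\ltsr{\epsilon})$ and $\Omega^{W^{c}*}((\epsilon^{T}X)_{W^{c}})/(n\ltsr{\epsilon})$ are simultaneously bounded by $d$, so that $\lambda^{m}=\max(\lambda^{S},\lambda^{S^{c}})\le d$ for the relevant allowed set.

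Next, on $\mathcal{T}$ I would substitute this deterministic bound into the conclusion of Corollary~\ref{th3}: every occurrence of $\lambda^{m}$ (inside $L_{S_{\star}}$, inside $\tilde\lambda+\lambda^{m}$, and through the constant $C$ in the definitions of $C_{1},C_{2}$) is replaced by its upper bound, and the monotonicity of the right-hand side of \eqref{eq:theoremm} in $\lambda^{m}$ then yields the two stated inequalities with the same $C_{1},C_{2},\beta_{\star},S_{\star}$. Because the underlying deterministic statement \eqref{eq:total} holds pointwise whenever $a\lambda^{m}<\lambda$, the oracle inequalities therefore hold on all of $\mathcal{T}$, i.e. with probability at least $1-\alpha$, which is the claim.

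The main obstacle I anticipate is the bookkeeping around the data-dependent optimal set $S_{\star}$: the event $\mathcal{T}$ in Proposition~\ref{proposition1} is stated for a single fixed allowed set $W$, whereas $S_{\star}$ from \eqref{e3q} and \eqref{e2q} is selected by minimization and is therefore random. To make the substitution $\lambda^{m}\le d$ legitimate for $S=S_{\star}$ one must either enlarge $\mathcal{T}$ to a uniform control over all allowed sets (paying an extra union-bound factor that is absorbed into the choice of $\alpha$ and $t$) or argue that the bound is needed only for the finitely many candidate sets entering the minimization. The secondary point to check is consistency of the constants: the deterministic hypothesis demands $\lambda^{m}\le C\lambda$ with $0<C<\frac{1}{3(f+1)}$, so one has to confirm that the chosen penalty level makes $d/\lambda$ fall below the admissible threshold $\frac{1}{3(f+1)}$ on $\mathcal{T}$; everything else is the routine algebra already carried out in Corollary~\ref{th3}.
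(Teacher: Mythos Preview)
Your proposal is correct and matches the paper's own treatment: the paper does not give an explicit proof of this corollary but simply presents it as the ``new version of Corollary~\ref{th3}'' obtained by plugging the high-probability bound $\lambda^{m}\le d$ from Proposition~\ref{proposition1} and the subsequent discussion into the deterministic statement of Corollary~\ref{th3}. The two concerns you flag (uniformity of $\mathcal{T}$ over the random $S_{\star}$ and the compatibility of $d/\lambda$ with the threshold $\tfrac{1}{3(f+1)}$) are not addressed in the paper either; they are absorbed into the clause ``invoke the same assumptions as in Theorem~\ref{th2}'' and left implicit, so you are in fact being more careful than the original.
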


Now we still have a $\ltsr{\epsilon}^{2}$ term in the constants \eqref{eq:c1}, \eqref{eq:c2} of the oracle inequality. In order to handle this we need Lemma 1 from \citet{laurent}.
Which translates in our case to the probability inequality
%$$\Prob \left(\ltsr{\epsilon}^{2}\leq \sigma^{2}\left(1+2\left(\sqrt{x}+x\right)\right)\right)\geq 1-\exp\left(-n\cdot x\right).$$
$$\Prob \left(\ltsr{\epsilon}^{2}\leq \sigma^{2}\left(1+2x+2x^{2}\right)\right)\geq 1-\exp\left(-n\cdot x^{2}\right).$$
Here $x>0$ is a constant. Therefore we have that $\ltsr{\epsilon}^{2}$ is of the order of $\sigma^{2}$ with exponentially decaying probability in $n$. We could also write this in the following form
$$\Prob \left(\ltsr{\epsilon}^{2}\leq \sigma^{2}\cdot C \right)\geq 1-\exp\left(-\frac{n}{2}\left(C-\sqrt{2C-1}\right)\right).$$
Here we can choose any constant $C>1$ big enough and take the bound $\sigma^2\cdot C$ for $\ltsr{\epsilon}^{2}$ in the oracle inequality.
A similar bounds can be found in \citet{laurent} for $1/\ltsr{\epsilon}^{2}$. This takes care of the random part in the sharp oracle bound with the Gaussian errors.
\begin{figure}[H]
\centering
\subfloat[$\ell_{1}$-norm]{\includegraphics[width = 5.4cm]{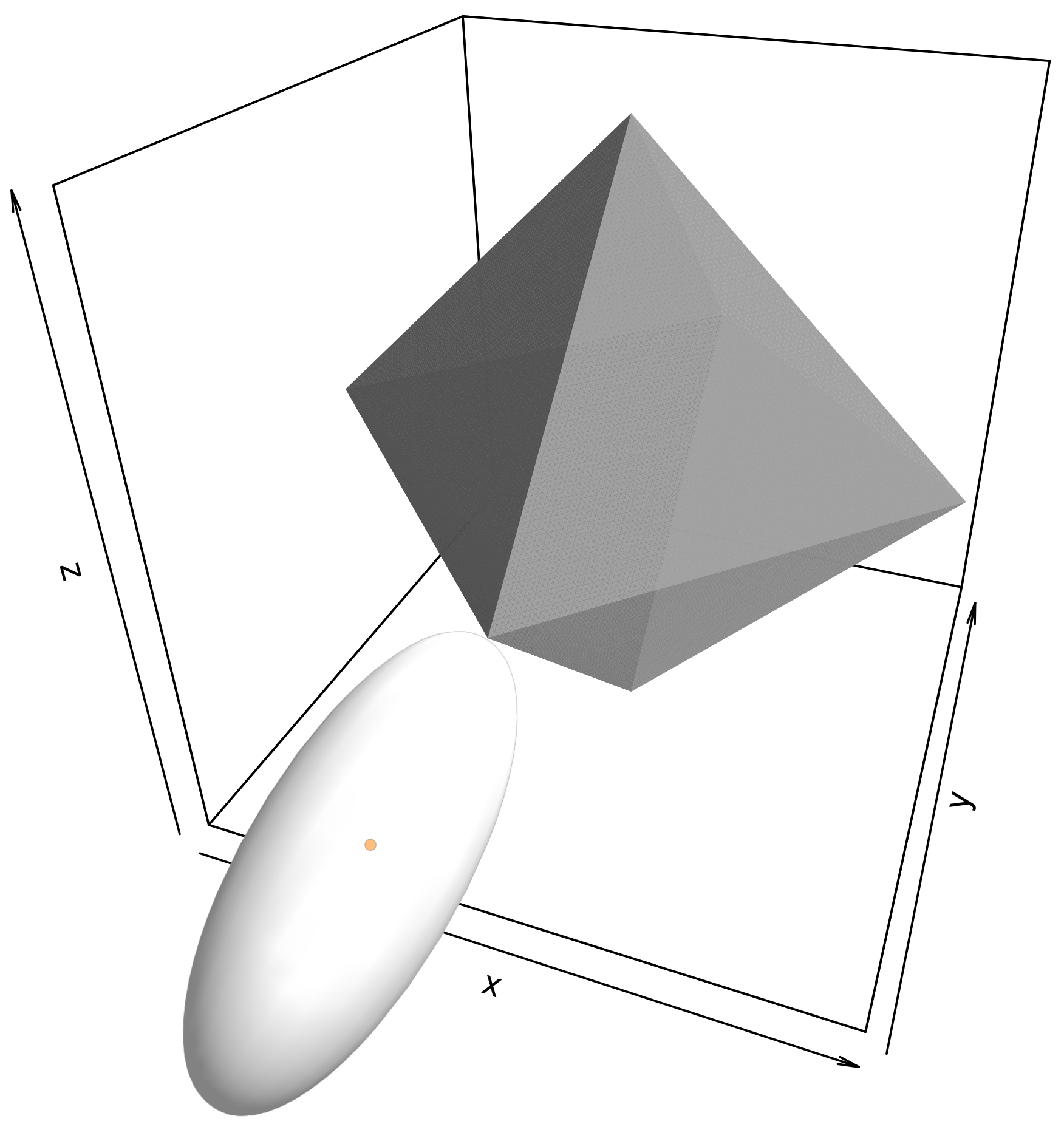}}\qquad
\subfloat[Group Lasso norm with groups $\{x\},\{y,z\}$]{\includegraphics[width = 5.4cm]{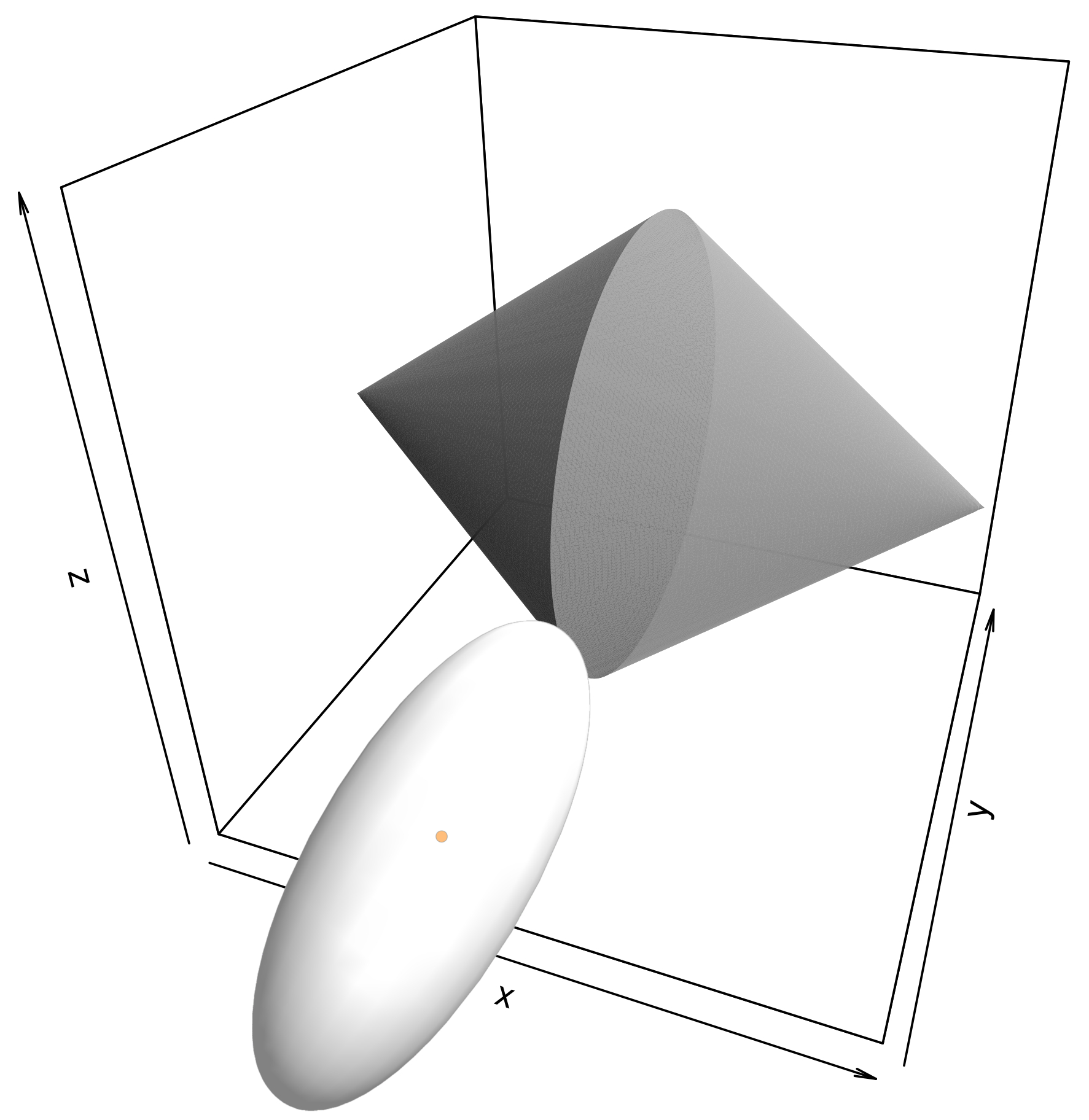}}\\
\subfloat[sorted $\ell_{1}$-norm with a $\lambda$ sequence $1>0.5>0.3$]{\includegraphics[width = 5.4cm]{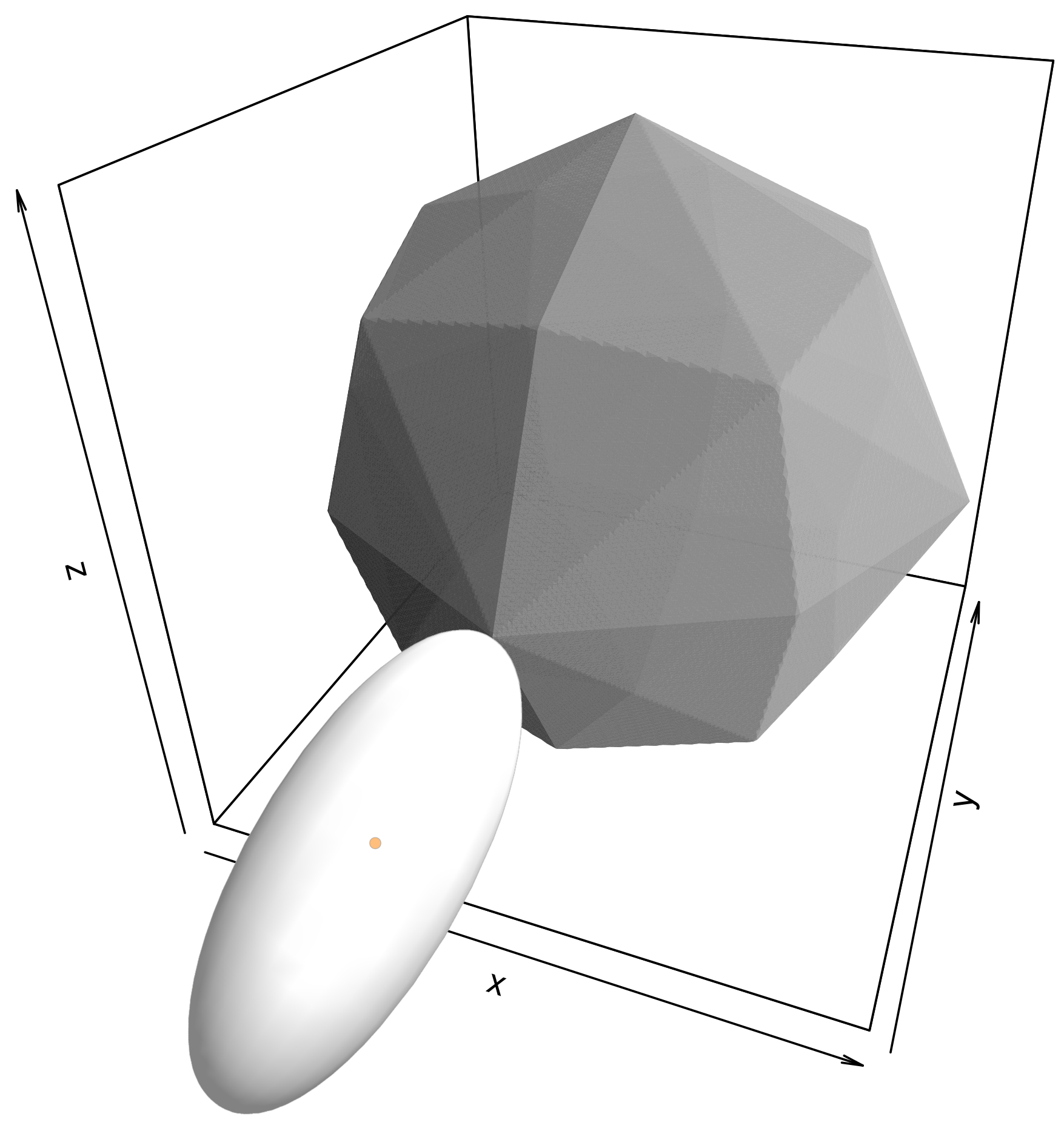}}\qquad
\subfloat[wedge norm]{\includegraphics[width = 5.4cm]{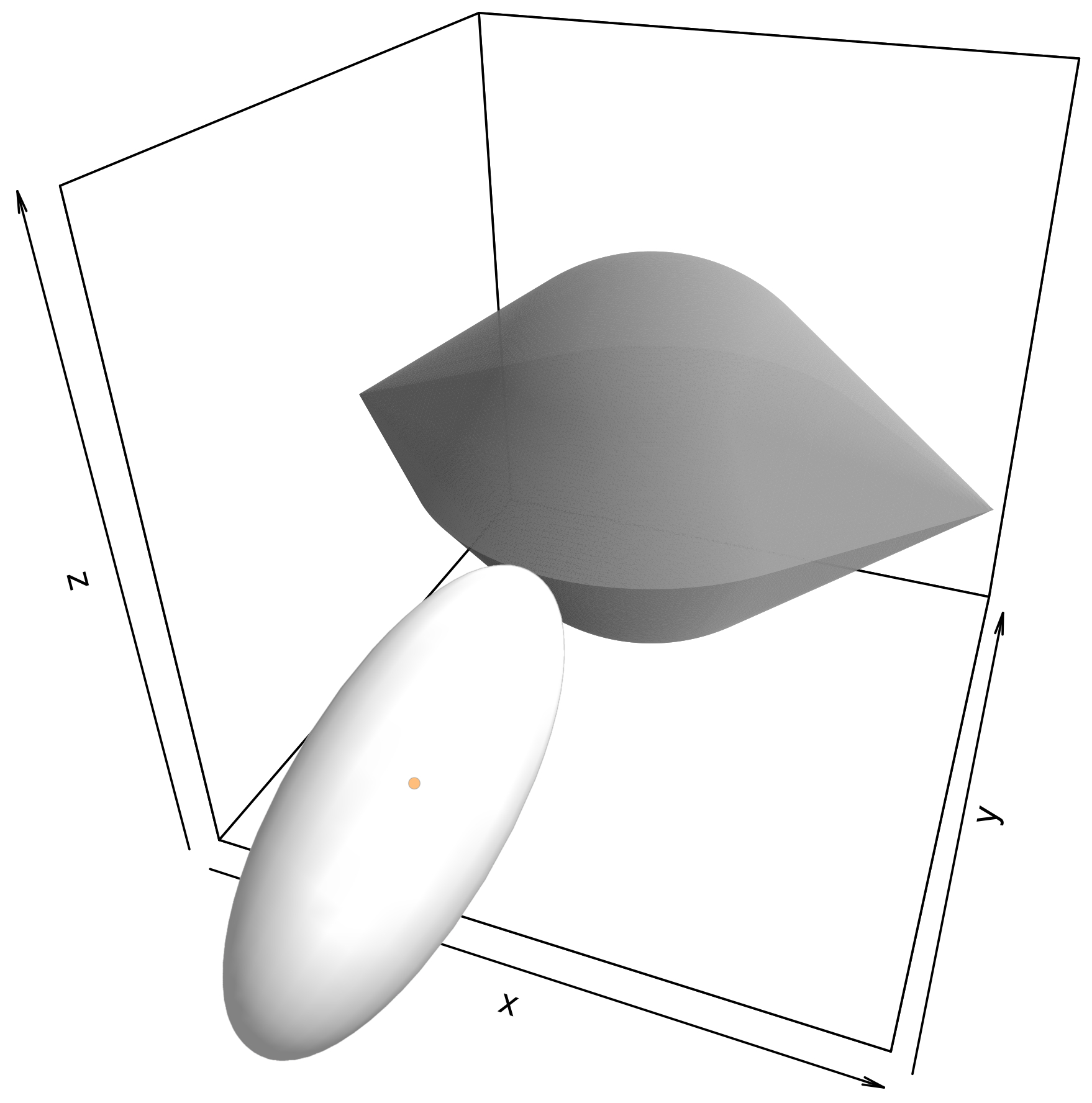}}\\
\caption{Pictorial description of how the estimator $\bh$ works, with unit balls of different sparsity inducing norms.}
\label{exs}
\end{figure}

\newpage
\section{Examples}
Here we will give some examples of estimators where our sharp oracle inequalities hold. Figure \ref{exs} shows the unit balls of some sparsity inducing norms that we will use as examples. In order
to give the theoretical $\lambda$ for these examples we will again assume Gaussian errors. Theorem \ref{th2} still holds for all the examples even for non Gaussian errors. 
Some of the examples will introduce new estimators inspired by methods similar to the square root LASSO.

\subsection*{Square Root LASSO}
First we examine the square root LASSO,
$$ \hat \beta_{srL} := \amin_{\beta \in \mathbb{R}^{p} } \biggl \{ \| Y - X \beta \|_n + \lambda \| \beta \|_1 \biggr \} . $$
Here we use the $\ell_1-$norm as a sparsity measure. We know that the $\ell_1-$norm has the nice property to be able to set certain unimportant parameters individually to zero.
As already mentioned the $\ell_1-$norm has the following decomposability property for any set $S$
$$\lo{\beta}=\lo{\beta_{S}}+\lo{\beta_{S^{c}}},\forall \beta \in \mathbb{R}^{p}.$$
Therefore we also have weak decomposability for all subsets $S\subset\{1,...,p\}$ with $\Omega^{S^{c}}$ being the $\ell_1-$norm again. Thus Assumption II is fulfilled for all sets $S$ and so
we are able to apply Theorem \ref{th2}.\\
Furthermore for the square root LASSO we have that $D = 1$. This is because the $\ell_{2}-$norm is bounded by the $\ell_1-$norm without any constant.
So in order to get the value of $\lambda$ we need to calculate the expectation of the dual norm of $\frac{\epsilon^{T}X}{\sigma n}$.
The dual norm of $\ell_1$ is the $\ell_{\infty}-$norm. By \citet{mi3}, we also have
$$\max\left(\E \left[\frac{\left\lVert(\epsilon^{T}X)_{S_{\star}^{c}}\right\rVert_{\infty}}{n\sigma }\right],\E \left[\frac{\left\lVert(\epsilon^{T}X)_{S_{\star}}\right\rVert_{\infty}}{n\sigma }\right]\right)\leq \sqrt{\frac{2}{n}}\left(2+\sqrt{\log(|p|)}\right).$$
Therefore the theoretical $\lambda$ for the square root LASSO can be chosen as
$$\boxed{\lambda= \sqrt{\frac{2}{n}}\left(t/\Delta+2+\sqrt{\log(|p|)}\right)}.$$
Even though this theoretical $\lambda$ is very close to being optimal, it is not optimal, see for example \citet{sara5}.
In the special case of the $\ell_{1}-$norm penalization, we can simplify Corollary \ref{coro1}:
\begin{corollary}[Square Root LASSO]
 Take $\lambda=  \sqrt{\frac{2}{n}}\left(t/\Delta+2+\sqrt{\log(|p|)}\right),$ where $t>0$ and $\Delta>1$ are chosen as in \eqref{eq:proob}. Invoke the same assumptions as in Corollary \ref{coro1}.
 Then for $\Omega(\cdot)=\lo{\cdot}$, we have with probability $1-\alpha$ that the following oracle inequalities hold true:
 \begin{align*}
  \ltn{X(\bh_{srL}-\bo )}&\leq  \ltn{X(\beta_{\star}-\bo )}+C_{1}\lambda^2 \cdot \GO^{2} (L_{S_{\star}},S_{\star}) \\
  \lo{\bh_{srL}-\beta_{\star}}&\leq C_{2}\left(\frac{\ltn{X(\beta_{\star}-\bo )}}{\lambda} + C_{1}\lambda \cdot \GO^{2} (L_{S_{\star}},S_{\star})\right).
 \end{align*}\label{coroe1}
\end{corollary}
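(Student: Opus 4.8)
The plan is to derive Corollary \ref{coroe1} as a direct specialization of the general Gaussian-error statement Corollary \ref{coro1} to the choice $\Omega(\cdot)=\lo{\cdot}$, so the whole task reduces to pinning down the three quantities that the general corollary leaves abstract: the weak-decomposability structure of $\ell_1$, the ball-comparison constant $D$, and the Gaussian-complexity expectation $\E[V]$, which together fix the boxed value of $\lambda$.

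First I would record that $\lo{\cdot}$ is weakly decomposable for \emph{every} index set $S$, with $\Omega^{S^{c}}=\lo{\cdot}$ taken on $\R{|S^{c}|}$: the exact identity $\lo{\beta}=\lo{\beta_{S}}+\lo{\beta_{S^{c}}}$ holds for all $\beta\in\Rp$ and all $S$, so Assumption II is satisfied for each $S$ (in particular for the optimizer $S_{\star}$ of Theorem \ref{th2}), and all hypotheses of Corollary \ref{coro1} are in force. Next, since $\lt{\beta}\le\lo{\beta}$ for every $\beta$ with no extra multiplicative factor, the comparison constant $D$ appearing in Proposition \ref{proposition1} may be taken to be $D=1$.

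The main computation, and the only genuinely non-routine step, is the evaluation of $\E[V]$. The dual of $\lo{\cdot}$ is $\linf{\cdot}$, and because a maximum over coordinates splits across the partition $\{W,W^{c}\}$ one gets $V=\max(V_{1},V_{2})=\linf{\epsilon^{T}X}/(n\sigma)$, so $\E[V]$ is precisely a Gaussian complexity of the (normalized) columns of $X$. Invoking the bound of \citet{mi3} under $(X^{T}X/n)_{ii}=1$ gives $\E[V]\le\sqrt{2/n}\,(2+\sqrt{\log p})$; substituting $D=1$ and this estimate into $\lambda=(t/\Delta)\,D\sqrt{2/n}+\E[V]$ yields the stated $\lambda=\sqrt{2/n}\,(t/\Delta+2+\sqrt{\log p})$. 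Since enlarging $\lambda$ only enlarges the event $\mathcal{T}$, the probability bound \eqref{eq:proob} still gives $\Prob(\mathcal{T})\ge 1-\alpha$, and on $\mathcal{T}$ the random penalty is dominated by the deterministic $d\le\lambda$, so the deterministic oracle bounds of Corollary \ref{th3} are in force there.

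Finally I would collapse the estimation-error term. Because $\beta_{\star}$ is supported in $S_{\star}$, exact $\ell_1$-decomposability yields $\lo{\bh_{S_{\star}}-\beta_{\star}}+\lo{\bh_{S_{\star}^{c}}}=\lo{\bh_{srL}-\beta_{\star}}$, which reduces the two-norm left-hand side of Corollary \ref{coro1} to the single $\ell_1$-estimation error claimed here, while the prediction bound transfers verbatim. The only real obstacle is thus the Gaussian-complexity estimate for $\E[V]$, imported from \citet{mi3}; every remaining step is bookkeeping that specializes the already-established general corollary.
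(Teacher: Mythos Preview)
Your proposal is correct and matches the paper's own derivation: the paper likewise obtains Corollary~\ref{coroe1} by specializing Corollary~\ref{coro1} to $\Omega=\lo{\cdot}$, noting full decomposability (hence Assumption~II for every $S$ with $\Omega^{S^{c}}=\lo{\cdot}$), taking $D=1$, bounding the Gaussian complexity $\E V$ via \citet{mi3} to fix $\lambda$, and then collapsing $\Omega(\bh_{S_{\star}}-\beta_{\star})+\Omega^{S_{\star}^{c}}(\bh_{S_{\star}^{c}})$ into $\lo{\bh_{srL}-\beta_{\star}}$ by exact $\ell_1$-decomposability. Your observation that $V=\max(V_1,V_2)=\linf{\epsilon^{T}X}/(n\sigma)$ is a clean way to see directly that $\E V$ (and not merely $\max(\E V_1,\E V_2)$) is controlled by the Maurer bound.
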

Remark that in Corollary \ref{coroe1} we have an oracle inequality for the estimation error $\lo{\bh_{srL}-\beta_{\star}}$ in $\ell_{1}$. 
This is due to the decomposability of the $\ell_1-$norm. In other examples we will have the sum of two norms.

\subsection*{Group Square Root LASSO}
 In order to set groups of variables simultaneously to zero, and not only individual variables, we will look at a different sparsity inducing norm. 
 Namely a $\ell_1-$type norm for grouped variables, called the group LASSO norm. The group square root LASSO was introduced by \citet{led1} as
 $$\bh_{gsrL}:=\amin_{\beta\in\mathbb{R}^{p}}\left\{ \ltsr{Y-X\beta}+\lambda \sum_{j=1}^{g}\sqrt{|G_{j}|}\lVert \beta_{G_{j}}\rVert_{2}\right\}.$$
 Here $g$ is the total number of groups, and $G_{j}$ is the set of variables that are in the $j$th group.
 Of course the $\ell_1-$norm is a special case of the group LASSO norm, when $G_{j}=\{j\}$ and $g=p$.
 
 The group LASSO penalty is also weakly decomposable with $\Omega^{S^{c}}=\Omega$, for any $S=\bigcup\limits_{j \in \mathcal{J}}G_{j}$, with any $\mathcal{J} \subset \{1,...,g\}.$
 So here the sparsity structure of the group LASSO norm induces the sets $S$ to be of the same sparsity structure in order to fulfil Assumption II.
 Therefore the Theorem \ref{th2} can also be applied in this case.\\
 How do we need to choose the theoretical $\lambda$? For the group LASSO norm we have $B_{2}\leq 1$.
 One can see this due to the fact that $\sqrt{a_{1}}+...+\sqrt{a_{g}}\geq \sqrt{a_{1}+...+a_{g}}$ for $g$ positive constants. And also $|G_{j}|\geq 1$ for all groups.
 Therefore
 $$\sum_{j=1}^{g}\sqrt{|G_{j}|}\lVert \beta_{G_{j}}\rVert_{2}\geq \sqrt{\sum_{i=1}^{p}\beta_{i}^{2}}.$$
 Remark that the dual norm is $\Omega^{*}(\beta)=\max\limits_{1\leq j\leq g}\lVert \beta_{G_{j}}\rVert_{2}/\sqrt{|G_{j}|} $. With \citet{mi3} we have
 
 $$\max\left(\E \left[\frac{\Omega^{*}\left((\epsilon^{T}X)_{S_{\star}^{c}}\right)}{n\sigma}\right],\E \left[\frac{\Omega^{*}\left((\epsilon^{T}X)_{S_{\star}}\right)}{n\sigma}\right]\right)\leq \sqrt{\frac{2}{n}}\left(2+\sqrt{\log(g)}\right).$$
 That is why $\lambda$ can be taken of the following form
 $$\boxed{\lambda= \sqrt{\frac{2}{n}}\left(t/\Delta+2+\sqrt{\log(g)}\right)}.$$
 And we get a similar corollary for the group square root LASSO like the Corollary \ref{coroe1} for the square root LASSO.
 In the case of the group LASSO, there are better results for the theoretical penalty level available, see for example Theorem 8.1 in \citet{sara3}. 
 This takes the minimal group size into account.

\subsection*{Square Root SLOPE}
 Here we introduce a new method called the square root SLOPE estimator, which is also part of the square root regularization family.
 Let us thus take a look at the sorted $\ell_1$ norm with some decreasing sequence $\lambda_{1}\geq\lambda_{2}\geq...\geq\lambda_{p}> 0$ ,
 $$J_{\lambda}(\beta):=\lambda_{1}\lvert\beta\rvert_{(1)}+...+\lambda_{p}\lvert\beta\rvert_{(p)}.$$
 This was shown to be a norm by \citet{zeng}.

Let $\pi$ be a permutation of $\{ 1 , \ldots , p \} $. The identity permutation is
denoted by $id$. In order to show weak decomposability for the norm $J_{\lambda}$ we need the following lemmas.

\begin{lemma}[Rearrangement Inequality] Let $\beta_1 \ge \cdots \ge \beta_p$ be a decreasing sequence of non-negative numbers.
The sum $\sum_{i=1}^p \lambda_i \beta_{\pi(i) } $ is maximized over all
permutations $\pi$ at $\pi = id $.
\end{lemma}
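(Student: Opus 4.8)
The plan is to establish the inequality by a finite exchange (bubble-sort) argument, transforming an arbitrary permutation $\pi$ into the identity one adjacent transposition at a time, while showing that the objective $\sum_{i=1}^p \lambda_i \beta_{\pi(i)}$ never decreases along the way. The single computation that drives everything is an elementary two-term estimate: for positions $i<j$ the weight ordering gives $\lambda_i \ge \lambda_j$, so
$$ \big(\lambda_i-\lambda_j\big)\big(\beta_{\pi(j)}-\beta_{\pi(i)}\big)\ge 0 \qquad\text{whenever } \beta_{\pi(j)}\ge \beta_{\pi(i)}. $$
Expanding this and rearranging yields $\lambda_i\beta_{\pi(i)}+\lambda_j\beta_{\pi(j)}\le \lambda_i\beta_{\pi(j)}+\lambda_j\beta_{\pi(i)}$, which says that exchanging the two assigned values so that the larger $\beta$ sits at the position carrying the larger weight can only raise the objective.

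Next I would use this swap to remove inversions. Call a pair of positions $i<j$ a value-inversion of $\pi$ if $\beta_{\pi(i)}<\beta_{\pi(j)}$. If $\pi$ has any value-inversion then the sequence $\beta_{\pi(1)},\dots,\beta_{\pi(p)}$ fails to be weakly decreasing, so some \emph{adjacent} pair is strictly increasing, i.e. there is an index $i$ with $\beta_{\pi(i)}<\beta_{\pi(i+1)}$. Let $\pi'$ agree with $\pi$ except that $\pi'(i)=\pi(i+1)$ and $\pi'(i+1)=\pi(i)$. By the two-term estimate above (with $j=i+1$) the objective does not decrease, $\sum_k\lambda_k\beta_{\pi'(k)}\ge\sum_k\lambda_k\beta_{\pi(k)}$, while the adjacent swap lowers the total number of value-inversions by exactly one.

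Finally, since the number of value-inversions is a nonnegative integer that strictly decreases at each adjacent swap, after finitely many swaps I reach a permutation carrying no value-inversion, whose objective dominates that of the original $\pi$. A permutation with no value-inversion orders the values $\beta_{\pi(i)}$ weakly decreasingly; because the $\beta_i$ are themselves weakly decreasing and both sequences list the same multiset, one has $\beta_{\pi(i)}=\beta_i$ for every $i$, so this terminal objective equals the identity value $\sum_i\lambda_i\beta_i$. Chaining the inequalities gives $\sum_i\lambda_i\beta_{\pi(i)}\le\sum_i\lambda_i\beta_i$ for every $\pi$, which is exactly the claim that the maximum is attained at $\pi=id$.

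There is no genuine obstacle here, as this is the classical rearrangement inequality; the only point requiring care is the bookkeeping around ties. Equal values among the $\lambda_i$ or among the $\beta_i$ merely turn the intermediate inequalities non-strict and allow several permutations to share the maximal value, but they do not affect the conclusion that $id$ attains it. I would phrase the termination purely through the strictly decreasing inversion count (rather than through a strictly increasing objective) precisely so that these ties cause no difficulty.
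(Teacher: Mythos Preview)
Your proof is correct. The two-term swap estimate, the existence of an adjacent strict increase whenever the value sequence is not weakly decreasing, the fact that an adjacent swap changes the value-inversion count by exactly $\pm 1$ (here $-1$), and the identification of the terminal objective with $\sum_i \lambda_i\beta_i$ are all sound, and you handled ties cleanly by driving termination through the inversion count rather than through strict increase of the objective.

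The paper, however, argues differently: it proceeds by induction on $p$. Fixing $j=\pi(p)$, it applies the induction hypothesis to the remaining $p-1$ terms (which are a permutation of $\{\beta_i:\,i\neq j\}$ against $\lambda_1,\dots,\lambda_{p-1}$), obtaining $\sum_{i=1}^{p-1}\lambda_i\beta_{\pi(i)}\le \sum_{i<j}\lambda_i\beta_i+\sum_{i>j}\lambda_{i-1}\beta_i$, and then telescopes the discrepancy with $\sum_i\lambda_i\beta_i$ into the nonpositive quantity $\sum_{i>j}(\lambda_{i-1}-\lambda_i)(\beta_i-\beta_j)$. Your bubble-sort route and the paper's inductive route are the two standard proofs of the rearrangement inequality; yours is more transparently algorithmic and makes the role of each pairwise comparison explicit, while the paper's compresses the argument into a single inductive step at the cost of a short algebraic rearrangement. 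Neither gains anything substantive over the other here, since the lemma is only used as a black box in the weak decomposability of the sorted $\ell_1$ norm.
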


{\bf Proof.} The result is obvious when $p=2$. Suppose now that it is true for sequences of
length $p-1$. We then prove it for sequences of length $p$ as follows. Let $\pi$ be an arbitrary 
permutation with $j:= \pi(p)$. Then
$$ \sum_{i=1}^p \lambda_i \beta_{\pi(i)} = \sum_{i=1}^{p-1} \lambda_i \beta_{\pi(i)} + \lambda_p \beta_j .$$
By induction 
\begin{align*}\sum_{i=1}^{p-1} \lambda_i \beta_{\pi(i)} &\le \sum_{i=1}^{j-1} \lambda_i \beta_i +\sum_{i=j+1}^p \lambda_{i-1} \beta_i \\
&= \sum_{i\not=j } \lambda_i \beta_i + \sum_{i=j+1}^p ( \lambda_{i-1} - \lambda_i) \beta_i\\
&= \sum_{i=1}^p \lambda_i \beta_i + \sum_{i=j+1}^p ( \lambda_{i-1} - \lambda_{i}) \beta_i - \lambda_j \beta_j .
\end{align*}

Hence we have
\begin{align*} \sum_{i=1}^p \lambda_i \beta_{\pi(i)}  &\le \sum_{i=1}^p \lambda_i \beta_i +
\sum_{i=j+1}^p ( \lambda_{i-1} - \lambda_i) \beta_i + (\lambda_j  - \lambda_p)\beta_j \\
& =  \sum_{i=1}^p \lambda_i \beta_i + \sum_{i=j+1}^p ( \lambda_{i-1} - \lambda_i) \beta_i -
 \sum_{i=j+1}^p ( \lambda_{i-1} - \lambda_i) \beta_j\\
& =  \sum_{i=1}^p \lambda_i \beta_i + \sum_{i=j+1}^p ( \lambda_{i-1} - \lambda_i) (\beta_i - \beta_j) .
\end{align*}
 Since $\lambda_{i-1} \ge \lambda_i$ for all $1\le i \le p $ (defining $\lambda_0 =0$) and $\beta_i \le \beta_j$ for all
 $i>j$ we know that
 $$ \sum_{i=j+1}^p ( \lambda_{i-1} - \lambda_i) (\beta_i - \beta_j) \le 0 . $$
 \hfill $\sqcup \mkern -12mu \sqcap$

\begin{lemma} Let 
$$ \Omega (\beta) = \sum_{i=1}^p \lambda_i | \beta |_{(i)} , $$
and 
$$ \Omega^{S^{c}} ( \beta_{S^{c}}) = \sum_{l=1}^{r} \lambda_{p-r+l} |\beta |_{(l,S^{c})}, $$
where $r=p-s$ and
$ |\beta |_{(1,S^{c})} \ge \cdots \ge| \beta |_{(r,S^{c})}$ is the ordered sequence
in $\beta_{S^{c}}$. 
Then $\Omega (\beta) \ge \Omega(\beta_S) + \Omega^{S^{c}} (\beta_{S^{c}})$. 
Moreover $\Omega^{S^{c}}$ is the strongest norm among all $\underline \Omega^{S^{c}}$
for which $\Omega (\beta) \ge \Omega(\beta_S) + \underline \Omega^{S^{c}} (\beta_{S^{c}})$

\end{lemma}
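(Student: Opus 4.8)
The plan is to read off both assertions from the Rearrangement Inequality of the previous lemma. Write $s := |S|$ and $r := |S^{c}| = p - s$, and let $a_1 \ge \cdots \ge a_s$ be the decreasingly ordered absolute values of the coordinates of $\beta_S$ and $b_1 \ge \cdots \ge b_r$ those of $\beta_{S^{c}}$. Since $\beta_S$ carries $r$ vanishing coordinates, its sorted absolute values are $a_1, \dots, a_s, 0, \dots, 0$, so the trailing zeros absorb the $r$ smallest weights and hence $\Omega(\beta_S) = \sum_{i=1}^{s} \lambda_i a_i$. By definition $\Omega^{S^{c}}(\beta_{S^{c}}) = \sum_{l=1}^{r} \lambda_{s+l} b_l$, using $p - r = s$. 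Thus the right-hand side of the claimed inequality is the assignment that pairs the top $s$ weights with the $a_i$ and the bottom $r$ weights with the $b_l$.

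First I would prove the weak decomposability bound. The fully sorted sequence of $|\beta|$ is the sorted merge of the $a_i$ and the $b_l$, and $\Omega(\beta) = \sum_{i=1}^{p}\lambda_i |\beta|_{(i)}$ is exactly the value obtained by pairing the decreasing weights $\lambda_1 \ge \cdots \ge \lambda_p$ with this merge in sorted order. The quantity $\Omega(\beta_S) + \Omega^{S^{c}}(\beta_{S^{c}})$ pairs the very same $p$ magnitudes with the very same $p$ weights, but through the particular assignment described above, which need not be the sorted one. Applying the Rearrangement Inequality to the sorted merge, every such assignment is dominated by the sorted pairing, so $\Omega(\beta) \ge \Omega(\beta_S) + \Omega^{S^{c}}(\beta_{S^{c}})$.

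Next I would establish maximality. Let $\underline{\Omega}^{S^{c}}$ be any norm on $\mathbb{R}^{r}$ obeying $\Omega(\beta) \ge \Omega(\beta_S) + \underline{\Omega}^{S^{c}}(\beta_{S^{c}})$ for all $\beta$. Fix $\gamma \in \mathbb{R}^{r}$, set $\beta_{S^{c}} = \gamma$, and take $\beta_S$ with every coordinate of absolute value at least $|\gamma|_{(1)}$; the hypothesis then gives $\underline{\Omega}^{S^{c}}(\gamma) \le \Omega(\beta) - \Omega(\beta_S)$. The purpose of this choice is that it forces every $S$-coordinate to precede every $S^{c}$-coordinate in the sorted merge, so the particular assignment of the preceding paragraph coincides with the sorted one and the inequality there becomes an equality: $\Omega(\beta) - \Omega(\beta_S) = \sum_{l=1}^{r} \lambda_{s+l} b_l = \Omega^{S^{c}}(\gamma)$. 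Therefore $\underline{\Omega}^{S^{c}}(\gamma) \le \Omega^{S^{c}}(\gamma)$ for every $\gamma$, i.e.\ no admissible companion norm exceeds $\Omega^{S^{c}}$.

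The bookkeeping identifications of $\Omega(\beta_S)$ and $\Omega^{S^{c}}(\beta_{S^{c}})$ with the top- and bottom-weight pairings, together with the observation that $\Omega(\beta)$ computes the sorted merge, are routine once the ordered sequences $a_i, b_l$ are in place. I expect the only genuinely delicate point to be the equality case in the maximality part: one must recognize that enlarging $\beta_S$ until each of its coordinates dominates all of $\gamma$ is precisely the configuration in which weak decomposability is tight, and it is this tightness that pins $\Omega^{S^{c}}$ down as the strongest admissible norm.
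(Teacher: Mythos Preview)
Your argument is correct. The first assertion (weak decomposability) is proved exactly as in the paper: both of you observe that $\Omega(\beta_S)+\Omega^{S^{c}}(\beta_{S^{c}})$ is one particular pairing of the weights $\lambda_1\ge\cdots\ge\lambda_p$ with the multiset of magnitudes $\{a_i\}\cup\{b_l\}$, while $\Omega(\beta)$ is the sorted pairing, and the Rearrangement Inequality does the rest.

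For the maximality assertion your route differs from the paper's, and in a useful way. The paper asserts that it suffices to search only among candidates of the sorted-$\ell_1$ form $\underline\Omega^{S^{c}}(\beta_{S^{c}})=\sum_{l}\underline\lambda_{p-r+l}|\beta|_{(l,S^{c})}$ with a decreasing weight sequence, and then argues that within this restricted class the choice $\underline\lambda_{p-r+l}=\lambda_{p-r+l}$ is largest; the reduction to this class is not justified there. Your argument avoids this reduction entirely: by inflating $\beta_S$ so that every $S$-coordinate dominates every $S^{c}$-coordinate, you force the weak decomposability inequality to become an equality, and this pins down $\underline\Omega^{S^{c}}(\gamma)\le\Omega^{S^{c}}(\gamma)$ for an \emph{arbitrary} companion norm $\underline\Omega^{S^{c}}$. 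This is both shorter and more general than the paper's version, and it makes transparent that the optimality of $\Omega^{S^{c}}$ is witnessed precisely by the configurations in which the Rearrangement Inequality is tight.
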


{\bf Proof.} Without loss of generality assume $\beta_1 \ge \cdots \ge \beta_p \ge 0 $.
%Suppose $S^c:=  \{ j_1 < \cdots < j_r \} $ are removed. Then
%$$ \Omega^{-S}  (\beta_{-S}):= \sum_{l=1}^r \lambda_{p-r+l} \beta_{j_l} .$$
%and
%$$ \Omega (\beta) -  \Omega^{-S} (\beta_{-S})=  \Omega (\beta_S) . $$
We have
$$ \Omega (\beta_S) + \Omega^{S^{c}} (\beta_{S^{c}}) = \sum_{i=1}^p \lambda_i \beta_{\pi(i)} $$
for a suitable permutation $\pi$.
It follows that 
$$\Omega (\beta_S) + \Omega^{S^{c}} (\beta_{S^{c}})  \le \Omega (\beta) . $$
To show $\Omega^{S^{c}}$ is the strongest norm it is clear we need only to search among candidates of the form
$$ \underline \Omega^{S^{c}} (\beta_{S^{c}} )= \sum_{l =1}^r \underline \lambda_{p-r+l} \beta_ {\pi^{S^{c}} (l)}$$
where $\{ \underline \lambda_{p-r+l} \}$ is a decreasing positive sequence and
where $\pi^{S^{c}} (1) , \ldots , \pi^{S^{c}} (r) $ is a permutation of indices in $S^{c}$.
 
This is then maximized  by ordering the indices in $S^c$ in decreasing order. 
But then it follows that the largest norm is obtained by taking $\underline \lambda_{p-r+l} =
\lambda_{p-r+l} $ for all $l=1 , \ldots , r $. 
\hfill $\sqcup \mkern -12mu \sqcap$

 The SLOPE was introduced by \citet{candes1} in order to better control the false discovery rate, and is defined as:
 $$\hat{\beta}_{SLOPE}:=\arg\min_{\beta\in\mathbb{R}^{p}}\left\{\ltsr{Y-X\beta}^{2}+\lambda J_{\lambda}(\beta)\right\}.$$
 Now we are able to look at the square root SLOPE, which is the estimator of the form:
 $$\hat{\beta}_{srSLOPE}:=\arg\min_{\beta\in\mathbb{R}^{p}}\left\{\ltsr{Y-X\beta}+\lambda J_{\lambda}(\beta)\right\}.$$
 The square root SLOPE replaces the squared $\ell_{2}-$norm with a $\ell_{2}-$norm.
 With Theorem \ref{th2} we have provided a sharp oracle inequality for this new estimator, the square root SLOPE.

 For the SLOPE penalty we have $B_{2}\leq \frac{1}{\lambda_{p}}$, if $\lambda_{p}>0$. This is because
 \begin{align*}
  \frac{J_{\lambda}(\beta)}{\lambda_{p}}&= \frac{\lambda_{1}}{\lambda_{p}}|\beta|_{(1)}+...+\frac{\lambda_{p}}{\lambda_{p}}|\beta|_{(p)}\\
  &\geq \sum_{i=1}^{p}|\beta_{i}| = \lo{\beta}\\
  &\geq \lVert \beta \rVert_{2}.
 \end{align*}
 So the bound gets scaled by the smallest $\lambda$.
 The dual norm of the SLOPE is by Lemma 1 of \citet{zeng2}
 $$J_{\lambda}^{*}(\beta)=\max_{k=1,...,p}\left\{ \bigg(\sum_{j=1}^{k}\lambda_{j}\bigg)^{-1}\cdot \lo{\beta^{(k)}} \right\},$$
 Here $\beta^{(k)}:=(\beta_{(1)},...,\beta_{(k)})^{T}$ is the vector which contains the $k$ largest elements of $\beta$.
 
 Again by \citet{mi3} we have
 $$\max\left(\E \left[\frac{J_{\lambda}^{*}\left((\epsilon^{T}X)_{S_{\star}}\right)}{n\sigma }\right],\left[\frac{J_{\lambda}^{S_{\star}^{c}*}\left((\epsilon^{T}X)_{S_{\star}^{c}}\right)}{n\sigma }\right]\right)\leq \sqrt{\frac{2}{n}}\left(\frac{2\sqrt{2}+1}{\sqrt{2}}+\sqrt{\log(|R^{2}|)}\right).$$
 Here we denote by $R^{2}:=\sum\limits_{i}\frac{1}{\lambda_{i}^{2}}$. Therefore we can choose $\lambda$ as
 $$\boxed{\lambda=\sqrt{\frac{2}{n}}\left(\frac{t}{\lambda_{p}\Delta}+\frac{2\sqrt{2}+1}{\sqrt{2}}+\sqrt{\log(|R^{2}|)}\right)}.$$
 Let us remark that the asymptotic minimaxity of SLOPE can be found in \citet{candes2}.

\subsection*{Sparse Group Square Root LASSO}

 The sparse group square root LASSO can be defined similarly to the sparse group LASSO, see \citet{simon}. This new method is defined as:
 $$\hat{\beta}_{srSGLASSO}:=\amin_{\beta \in \mathbb{R}^{p}} \left\{ \ltsr{Y-X\beta }+\lambda \left\lVert \beta \right\rVert_{1} +\eta \sum_{t=1}^{T}\left\lVert  \beta_{I_{t}} \right\rVert_{2}\sqrt{\lvert G_{t} \rvert} \right\},$$
 where we have a partition as follows, $G_{t}\subset \{1,...,p\} \ \forall t \in {1,...,T}$ , $\bigcup\limits_{t=1}^{T}G_{t}=\{1,...,p\}$ and $G_{i}\cap G_{j}=\varnothing\ \forall i\neq j$.
 This penalty is again a norm and it not only chooses sparse groups by the group LASSO penalty, but also sparsity inside of the groups with the $\ell_1-$norm.
 Define $R(\beta):=  \lambda \left\lVert \beta \right\rVert_{1} + \eta \sum_{t=1}^{T}\left\lVert  \beta_{I_{t}} \right\rVert_{2}\sqrt{\lvert G_{t} \rvert}$ and 
 $R^{S^{c}}(\beta):= \lambda \left\lVert \beta \right\rVert_{1}$. Then we have weak decomposability for any set $S$
 $$R(\beta_{S})+R^{S^{c}}(\beta_{S^{c}})\leq R(\beta).$$
 This is due to the weak decomposability property of the $\ell_1-$norm and 
 $\left\lVert \beta_{S} \right\rVert_{2}= \sqrt{\sum\limits_{j \in S}\beta_{j}^{2}}\leq \sqrt{\sum\limits_{j \in S}\beta_{j}^{2}+\sum\limits_{j \in S^{c}}\beta_{j}^{2}}=\left\lVert \beta\right\rVert_{2}$.
 Now in order to get the theoretical $\lambda$ let us note that if we sum two norms, it is again a norm. Then the dual of this added norm is, because of the supremum taken over the unit ball,
 smaller than dual norm of each one of the two norms individually. So we can invoke the same theoretical $\lambda$ as with the square root LASSO
 $$\boxed{\lambda= \sqrt{\frac{2}{n}}\left(t/\Delta+2+\sqrt{\log(|p|)}\right)}.$$
 And also the theoretical $\eta$ like the group square root LASSO
 $$\boxed{\eta= \sqrt{\frac{2}{n}}\left(t/\Delta+2+\sqrt{\log(g)}\right)}.$$
 But of course we will not get the same Corollary, because the $\Omega-$effective sparsity will be different.

\subsection*{Structured Sparsity}

 Here we will look at the very general concept of structured sparsity norms.
 Let $\mathcal{A}\subset \left[0,\infty\right)^{p}$ be a convex cone such that $\mathcal{A}\cap\left(0,\infty\right)^{p}\neq \varnothing$. Then
 $$\Omega(\beta)=\Omega(\beta;\mathcal{A}):= \min_{a\in \mathcal{A}}\frac{1}{2}\sum\limits_{j=1}^{p}\left(\frac{\beta_{j}^{2}}{a_{j}}+a_{j}\right),$$
 is a norm by \citet{mi2}. Some special cases are for example the $\ell_1-$norm or the wedge or box norm.
 Define
 $$\mathcal{A}_{S}:=\{a_{S}: a\in \mathcal{A}\}.$$
 Then \citet{sara1} also showed that for any $\mathcal{A}_{S}\subset\mathcal{A}$ we have that the set $S$ is allowed and we have weak decomposability for the norm
 $\Omega(\beta)$ with $\Omega^{S^{c}}(\beta_{S^{c}}):=\Omega(\beta_{S^{c}},\mathcal{A}_{S^{c}})$.
 Hence the estimator
 $$\hat{\beta}_{s}=\amin_{\beta \in \mathbb{R}^{p}} \left\{ \ltsr{Y-X\beta }+\lambda \min_{a\in \mathcal{A}}\frac{1}{2}\sum\limits_{j=1}^{p}\left(\frac{\beta_{j}^{2}}{a_{j}}+a_{j}\right)\right\},$$
 has also the sharp oracle inequality.
 The dual norm is given by
 \begin{align*}
 \Omega^{*}(\omega;\mathcal{A})&=\max\limits_{a\in\mathcal{A}(1)}\sqrt{\sum_{j=1}^{p}a_{j}\omega_{j}^{2}}, \text{ }\text{ }\text{ }\text{ }\omega \in \mathbb{R}^{p},\\
 \Omega^{S^{c}*}(\omega;\mathcal{A}_{S^{c}})&=\max\limits_{a\in\mathcal{A}_{S^{c}}(1)}\sqrt{\sum_{j=1}^{p}a_{j}\omega_{j}^{2}}, \text{ }\omega \in \mathbb{R}^{p}.
 \end{align*}
 Here $\mathcal{A}_{S^{c}}(1):=\{a\in\mathcal{A}_{S^{c}}:\lo{a}=1\}$ and $\mathcal{A}(1):=\{a\in\mathcal{A}:\lo{a}=1\}$.
 Then once again by \citet{mi3} we have
   \begin{small}$$\max\left(\E \left[\frac{\Omega^{*}\left((\epsilon^{T}X)_{S_{\star}};\mathcal{A}_{S_{\star}}\right)}{n\sigma}\right], \E \left[\frac{\Omega^{*}\left((\epsilon^{T}X)_{S_{\star}^{c}};\mathcal{A}_{S_{\star}^{c}}\right)}{n\sigma}\right]\right)\leq 
   \sqrt{\frac{2}{n}}\widetilde{\mathcal{A}}_{S_\star}\left(2+\sqrt{\log(|\E(\mathcal{A})|)}\right).$$
   \end{small}
 Here $\E(\mathcal{A})$ are the extreme points of the closure of the set $\left\{\frac{a}{\lo{a}}:a\in\mathcal{A}\right\}$. 
 %If $(X^{T}X)_{i,i}/n=1$, then we can write
 %\begin{small}$$\max\left(\E \left[\frac{\Omega^{*}\left((\epsilon^{T}X)_{S_{\star}};\mathcal{A}_{S_{\star}}\right)}{n\sigma}\right], \E \left[\frac{\Omega^{S_{\star}^{c}*}\left((\epsilon^{T}X)_{S_{\star}^{c}};\mathcal{A}_{S_{\star}^{c}}\right)}{n\sigma}\right]\right)\leq \sqrt{\frac{2}{n}}\widetilde{\mathcal{A}}\left(2+\sqrt{\log(|\E(\mathcal{A})|)}\right),$$
 %\end{small}
 With the definition $\widetilde{\mathcal{A}}_{S_\star}:=
 \max\left(\sqrt{\sum_{i=1}^{n}\Omega(X_{i,S_{\star}};\mathcal{A}_{S_{\star}})},\sqrt{\sum_{i=1}^{n}\Omega(X_{i,S_{\star}^{c}};\mathcal{A}_{S_{\star}^{c}})}\right)$.
 That is why $\lambda$ can be taken of the following form
 $$\boxed{\lambda= \sqrt{\frac{2}{n}}\left(tD/\Delta+\widetilde{\mathcal{A}}_{S_{\star}}\left(2+\sqrt{\log(|\E(\mathcal{A})|)}\right) \right)}.$$
 Since we do not know $S_{\star}$ we can either upper bound $\widetilde{\mathcal{A}}_{S_\star}$ for a given norm, or use the fact that 
 $\Omega(\beta)\geq \lo{\beta}$ and $\Omega^{*}(\beta)\leq \linf{\beta}$ for all $\beta\in \mathbb{R}^{p}$. Therefore use the same $\lambda$ as for the square root LASSO.
 And we get similar corollaries for the structured sparsity norms like the Corollary \ref{coroe1} for the square root LASSO.

\section{Simulation: Comparison between srLASSO and srSLOPE}
The goal of this simulation is to see how the estimation and prediction errors for the square root LASSO and the square root SLOPE behave under
some Gaussian designs. We propose Algorithm \ref{algo1} to solve the square root SLOPE:
\IncMargin{1em}
\begin{algorithm}\label{algo1}
  \SetAlgoLined
  \caption{srSLOPE}
  \SetKwInOut{Input}{input}
  \SetKwInOut{Output}{output}

\Indm  
  \Input{$\beta^{0}$\hspace{2mm} a starting parameter vector,\\ $\lambda$\hspace{4mm} a desired penalty level with a decreasing sequence,\\ $Y$\hspace{3mm} the response vector,\\ $X$\hspace{3mm} the design matrix.}
  \Output{$\hat{\beta}_{srSLOPE}=\amin\limits_{\beta\in\mathbb{R}^{p}}\left(\ltsr{Y-X\beta}+\lambda J_{\lambda}(\beta)\right)$}
\Indp
  \BlankLine
  \For{$i\leftarrow 0$ \KwTo $i_{\text{stop}}$}{ 
    $\sigma_{i+1}\leftarrow \ltsr{Y-X\beta_{i}}$\;
    $\beta_{i+1} \leftarrow \amin\limits_{\beta\in\mathbb{R}^{p}}\left(\ltsr{Y-X\beta}^{2}+\sigma_{i+1} \lambda J_{\lambda}(\beta)\right)$ \; }
    
\end{algorithm}
\DecMargin{1em}

Note that in Algorithm \ref{algo1} Line 3 we need to solve the usual SLOPE. To solve the SLOPE we have used the algorithm provided in \citet{candes1}. 
For the square root LASSO we have used the R-Package flare by \citet{flare}.

We consider a high-dimensional linear regression model:
$$Y=X\beta^{0}+\epsilon,$$
with $n=100$ response variables and $p=500$ unknown parameters. The design matrix $X$ is chosen with the rows being fixed i.i.d. realizations from $\mathcal{N}(0,\Sigma)$. 
Here the covariance matrix $\Sigma$ has a Toeplitz structure $$\Sigma_{i,j}=0.9^{|i-j|}.$$
We choose i.i.d. Gaussian errors $\epsilon$ with a variance of $\sigma^{2}=1$. 
For the underlying unknown parameter vector $\beta^{0}$ we choose different settings. For each such setting we calculate the square root LASSO and the square root SLOPE with the theoretical $\lambda$ given in this paper
and the $\lambda$ from a 8-fold Cross-validation on the mean squared prediction error.
We use $r=100$ repetitions to calculate the $\ell_1-$estimation error, the sorted $\ell_{1}-$estimation error and the $\ell_2-$prediction error.
As for the definition of the sorted $\ell_{1}-$norm, we chose a regular decreasing sequence from
$1$ to $0.1$ with length $500$.
The results can be found in Table \ref{m1},\ref{m2},\ref{m3} and \ref{m4}.

%\newpage
\underline{Decreasing Case:}\\
Here the active set is chosen as $S_{0}=\{1,2,3,...,7\}$, and \\$\beta^{0}_{S_{0}}=(4,\text{ }3.\overline{6},\text{ }3.\overline{3},\text{ }3,\text{ }2.\overline{6},\text{ }2.\overline{3},\text{ }2)^{T}$
is a decreasing sequence.
\begin{table}[H]
\centering
\caption{Decreasing $\beta$}
\label{m1}
\resizebox{\columnwidth}{!}{
\begin{tabular}{rcccccc}
 & \multicolumn{3}{c|}{theoretical $\lambda$}        & \multicolumn{3}{c}{Cross-validated $\lambda$} \\ 
 & $\lVert\beta^{0}-\hat{\beta}\rVert_{\ell_1}$& $J_{\lambda}(\beta^{0}-\hat{\beta})$ & \multicolumn{1}{l|}{$\lt{X(\beta^{0}-\hat{\beta})}$}&  
 $\lVert\beta^{0}-\hat{\beta}\rVert_{\ell_1}$ &  $J_{\lambda}(\beta^{0}-\hat{\beta})$     &   $\lt{X(\beta^{0}-\hat{\beta})}$ \\ \hline
 
 \text{\footnotesize srSLOPE}& 2.06     & 0.21     & \multicolumn{1}{c|}{\cc 4.12} & 2.37    &  0.26    & \cc 3.88 \\ \hline
 \text{\footnotesize srLASSO}& \cc 1.85 & \cc 0.19 & \multicolumn{1}{c|}{ 5.51}    &\cc 1.78 & \cc 0.19 & 5.05     \\ \hline
\end{tabular}
}
\end{table}

\underline{Decreasing Random Case:}\\
The active set was randomly chosen to be $S_{0}=\{154, 129, 276,  29, 233, 240, 402\}$ and again $\beta^{0}_{S_{0}}=(4,\text{ }3.\overline{6},\text{ }3.\overline{3},\text{ }3,\text{ }2.\overline{6},\text{ }2.\overline{3},\text{ }2)^{T}$.
\begin{table}[H]
\centering
\caption{Decreasing Random $\beta$}
\label{m2}
\resizebox{\columnwidth}{!}{
\begin{tabular}{rcccccc}
 & \multicolumn{3}{c|}{theoretical $\lambda$}        & \multicolumn{3}{c}{Cross-validated $\lambda$} \\ 
 & $\lVert\beta^{0}-\hat{\beta}\rVert_{\ell_1}$& $J_{\lambda}(\beta^{0}-\hat{\beta})$ & \multicolumn{1}{l|}{$\lt{X(\beta^{0}-\hat{\beta})}$}&  
 $\lVert\beta^{0}-\hat{\beta}\rVert_{\ell_1}$ &  $J_{\lambda}(\beta^{0}-\hat{\beta})$     &   $\lt{X(\beta^{0}-\hat{\beta})}$ \\ \hline
 
 \text{\footnotesize srSLOPE}& \cc 4.50 & \cc 0.49& \multicolumn{1}{c|}{\cc 7.74} &7.87     & 1.09     & \cc 7.68 \\ \hline
 \text{\footnotesize srLASSO}& 8.48     & 0.89    & \multicolumn{1}{c|}{29.47}    &\cc 7.81 & \cc 0.85 & 9.19     \\ \hline
\end{tabular}
}
\end{table}

\underline{Grouped Case:}\\
Now in order to see if the square root SLOPE can catch grouped variables better than the square root LASSO we look at
an active set $S_{0}=\{1,2,3,...,7\}$ together with $\beta^{0}_{S_{0}}=(4,4,4,3,3,2,2)^{T}$.
\begin{table}[H]
\centering
\caption{Grouped $\beta$}
\label{m3}
\resizebox{\columnwidth}{!}{
\begin{tabular}{rcccccc}
 & \multicolumn{3}{c|}{theoretical $\lambda$}        & \multicolumn{3}{c}{Cross-validated $\lambda$} \\ 
 & $\lVert\beta^{0}-\hat{\beta}\rVert_{\ell_1}$& $J_{\lambda}(\beta^{0}-\hat{\beta})$ & \multicolumn{1}{l|}{$\lt{X(\beta^{0}-\hat{\beta})}$}&  
 $\lVert\beta^{0}-\hat{\beta}\rVert_{\ell_1}$ &  $J_{\lambda}(\beta^{0}-\hat{\beta})$     &   $\lt{X(\beta^{0}-\hat{\beta})}$ \\ \hline
 
 \text{\footnotesize srSLOPE}& \cc 2.81 & \cc 0.29& \multicolumn{1}{c|}{\cc 6.43} &\cc 1.71& \cc 0.18 & \cc 3.65 \\ \hline
 \text{\footnotesize srLASSO}& 3.02     & 0.31    & \multicolumn{1}{c|}{8.37}     &1.83    & 0.19     & 4.25     \\ \hline
\end{tabular}
}
\end{table}

\underline{Grouped Random Case:}\\
Again we take the same randomly chosen set $S_{0}=\{154, 129, 276,  29, 233, 240, 402\}$ with
$\beta^{0}_{S_{0}}=(4,4,4,3,3,2,2)^{T}$.
\begin{table}[H]
\centering
\caption{Grouped Random $\beta$}
\label{m4}
\resizebox{\columnwidth}{!}{
\begin{tabular}{rcccccc}
 & \multicolumn{3}{c|}{theoretical $\lambda$}        & \multicolumn{3}{c}{Cross-validated $\lambda$} \\ 
 & $\lVert\beta^{0}-\hat{\beta}\rVert_{\ell_1}$& $J_{\lambda}(\beta^{0}-\hat{\beta})$ & \multicolumn{1}{l|}{$\lt{X(\beta^{0}-\hat{\beta})}$}&  
 $\lVert\beta^{0}-\hat{\beta}\rVert_{\ell_1}$ &  $J_{\lambda}(\beta^{0}-\hat{\beta})$     &   $\lt{X(\beta^{0}-\hat{\beta})}$ \\ \hline
 
 \text{\footnotesize srSLOPE}& \cc 6.05  & \cc 0.66& \multicolumn{1}{c|}{\cc 12.84} &\cc 5.80& \cc 0.66 & \cc 5.78 \\ \hline
 \text{\footnotesize srLASSO}& 16.90     & 1.77    & \multicolumn{1}{c|}{66.68}     &6.14    & 0.67     & 6.67     \\ \hline
\end{tabular}
}
\end{table}

The random cases usually lead to larger errors for both estimators. This is due to the correlation structure of the design matrix. 
The square root SLOPE seems to outperform the square root LASSO in the cases where
$\beta^{0}$ is somewhat grouped (grouped in the sense that amplitudes of same magnitude
appear). This is due to the structure of the sorted $\ell_{1}-$norm, which has some of the sparsity properties of $\ell_{1}$ as well as some of the grouping properties of $\ell_{\infty}$, 
see \citet{zeng}. Therefore the square root SLOPE reflects the underlying sparsity structure in the grouped cases.
What is also remarkable is that the square root SLOPE always has a better mean squared prediction error than the square root LASSO. This is even in cases, where square root LASSO has better
estimation errors. The estimation errors seem to be better for the square root LASSO in the decreasing cases.

\section{Discussion}
Sparsity inducing norms different from $\ell_{1}$ may be used to facilitate the interpretation of the results.
Depending on the sparsity structure we have provided sharp oracle inequalities for square root regularization. 
Due to the square root regularizing we do not need to estimate the variance, the estimators are all pivotal.
Moreover, because the penalty is a norm the optimization problems are all convex, which is a practical advantage when implementing the estimation procedures.
For these sharp oracle inequalities we only needed the weak decomposability and not the decomposability property of the $\ell_1-$norm. The weak decomposability generalizes the desired property
of promoting an estimated parameter vector with a sparse structure.
The structure of the $\Omega-$ and $\Omega^{S^{c}}-$norms influence the oracle bound. Therefore it is useful to use norms that reflect the true underlying sparsity structure.\\

\newpage
%%%%%%%%%%%%%%%%%%%%%%%%%%%%%%
                             %
\bibliography{reference2}{}  %
\bibliographystyle{plainnat} %
                             %
%%%%%%%%%%%%%%%%%%%%%%%%%%%%%%

\end{document}